\theoremstyle{plain}
\newtheorem{thm}[subsection]{Theorem}
\newtheorem{cor}[subsection]{Corollary}
\newtheorem{lem}[subsection]{Lemma}
\newtheorem{prop}[subsection]{Proposition}
\theoremstyle{definition}
\newtheorem{defn}[subsection]{Definition}
\newtheorem*{rem}{Remark}
\newtheorem*{ex}{Example}
\def\n{\mathbb{N}}
\def\z{\mathbb{Z}}
\def\ra{\rightarrow}
\def\l{\lambda}
\def\e{\epsilon}
\def\qb#1{\boldsymbol{\left[#1\right]}}
\def\ls{\hspace*{.13in}}
\def\bs{\backslash}
\def\lps{.\hspace*{-.01in}.\hspace*{-.01in}.}
\def\dlps{\raisebox{.09in}{.}\hspace*{.01in}\raisebox{.04in}{.}\hspace*{.01in}\raisebox{-.01in}{.}}
\def\vlps{\raisebox{.06in}{$\cdot$}\hspace*{-.04in}\raisebox{.01in}{$\cdot$}\hspace*{-.04in}\raisebox{-.04in}{$\cdot$}}
\def\mf#1{\mathfrak{#1}}
\numberwithin{equation}{subsection}
\newcounter{listequation}
\begin{document}

\title[Combinatorial Invariance of Relative Kazhdan-Lusztig Polynomials]{Combinatorial Invariance of Relative Kazhdan-Lusztig Polynomials in the Hermitian Symmetric Case}
\author{W. Andrew Pruett}
\begin{abstract}
We develop a marking system for an analog of Hasse diagrams of intervals $[u,v]$ with $u\leq v$ in a Hermitian symmetric pair $W/W_J$, and use this to create a closed form algorithm for computing relative $R$-polynomials. The uniform nature of this algorithm allows us to show combinatorial invariance of relative Kazhdan-Lusztig polynomials in the Hermitian symmetric space setting.
\end{abstract}

\maketitle
\section{Introduction}
\subsection{Background}
In their seminal work \cite{KL}, Kazhdan and Lusztig defined for any Coxeter group $W$ a class of polynomials indexed by $W\times W$ now known as the Kazhdan-Lusztig polynomials of $W$.  These polynomials are usually defined in terms of an auxiliary set of polynomials, the $R$-polynomials, also indexed by $W\times W$.  

In 1987, Deodhar introduced parabolic analogs of these sets of polynomials in \cite{Deodhar2}, which he referred to as the relative $R$-polynomials and relative Kazhdan-Lusztig polynomials.  The relative polynomials reduce to the standard ones in the case that a trivial parabolic subgroup of $W$ is used.  He provided a recursive calculation for these polynomials similar to the original formulations of Kazhdan and Lusztig.

In \cite{Proctor1}, Proctor obtained the Hasse diagrams of quotient Bruhat orders that constituted lattices, and in \cite{Proctor2}, he gave a combinatorial game which again yielded these lattices, but with an explicit description of the reduced decompositions of the elements of $W/W_J$ associated to Hermitian symmetric pairs.   In \cite{Jakobsen}, Jakobsen found the same diagrams as the lattices of noncompact positive roots, where he referred to these diagrams as being of Hermitian type,  a practice we will continue in this paper.  The diagrams, or at least their outlines, have since appeared in many different contexts, including \cite{LS} and \cite[Section 3]{Boe}, where they are used to define trees from which relative Kazhdan-Lusztig polynomials can be derived, \cite{Brenti1}, \cite{Brenti2}, and \cite{BrentiSym}, where lattice paths of particular types are studied, and \cite[Sections 1.2 and 7]{ThomasYong} where they are used for calculations in quantum Schubert calculus, to name a few. 

Let $\mf{g}$ be a complex simple Lie algebra, $\mf{p}$ a maximal parabolic subalgebra with Levi decomposition $\mf{p}=\mf{k}+\mf{u}$, where $\mf{k}$ is the subalgebra whose basis is the root spaces spanned by compact roots, and $\mf{u}$ the subalgebra corresponding subalgebra for the span of the noncompact roots.  It is well-known that the following conditions are equivalent:
\begin{enumerate}
\item $(\mf{g},\mf{k})$ is a (complexified) Hermitian symmetric pair,
\item $\mf{u}$ is abelian,
\item the associated flag manifold $G/P$ is cominiscule.
\end{enumerate}
This equivalence connects Jakobsen's work, which centered on the positive noncompact roots of simple complex Lie algebras, and Proctor's, which focused on the combinatorics of cominiscule flag varieties.  In this work, if $W$ is the Weyl group of $G$, and $W_J$ the Weyl group of $P$, and  $G/P$ is cominiscule, we abuse notation and refer to the quotient $W/W_J$ as a Hermitian symmetric pair.  

\subsection{This paper}
The purpose of this work is to study the relative $R$-polynomials for Hermitian symmetric pairs by means of a marking algorithm on diagrams of Hermitian type.  In particular, for $W/W_J$ a Hermitian symmetric pair, and $u<v$ in $W/W_J$ , we show that there are canonically obtained diagrams $M$ for $u$ and $\Lambda$ for $v$ such that $M$ is contained within $\Lambda$, and the corners of the skew diagram $\Lambda\bs M$ along its Northwest edges determine the relative $R$-polynomial $R_{u,v}^J(q)$ completely.  As an application of the algorithm, we prove that in this situation, $R^J_{u,v}(q)$ is dependent only on the poset isomorphism class of $[u,v]$.  In particular, if $[u,v]$ is an interval in $W/W_J$ and $[u',v']$ is an interval in $W'/W_{J'}$, then $R_{u,v}^J(q)=R_{u',v'}^{J'}(q)$.  This settles the question of combinatorial invariance for the relative $R$-polynomials for $W/W_J$ a Hermitian symmetric pair in a uniform way.  As relative Kazhdan-Lusztig polynomials are sums of relative $R$-polynomials, their combinatorial invariance follows.

It should be mentioned that a different combinatorial derivation of the $R$-polynomials has been recently published by F. Brenti (for $A_n/A_{p-1}\times A_{n-p}$, see \cite{Brenti1}, for all other types, see \cite{Brenti2}).  The algorithm we give has the benefit of being uniform for all types including $E_6/D_5$ and $E_7/E_6$.


\section{Preliminaries}
\subsection{Posets}
We follow \cite{Stanley} for all definitions involving posets. In particular, if $P:=(P,\leq)$ is a poset, $u,v\in P$, we write $[u,v]:=\{w:u\leq w\leq v\}$ and call this the {\it interval of $u$ and $v$}.  Similarly, $(u,v):=[u,v]\bs\{u,v\}$.  For $u\in P$, we refer to the lower interval of $u$ to be the set $\{w:w\leq u\}$. If $u<v$, and $(u,v)=\emptyset$, then we say that $u$ is covered by $v$, and write $u\triangleleft v$.  We say that $u$ and $v$ are comparable if $u\leq v$ or $v\leq u$ holds.

Recall that for $m\in\n$, 
\begin{eqnarray*}
\qb{m}&:=&\dfrac{q^m-1}{q-1}=1+q+\cdots +q^{m-1}\\
\qb{m!}&:=&\qb{m}\qb{m-1}\cdots \qb{1}
\end{eqnarray*}
where $k=m$ if $m$ is odd, and $k=m-1$ if $m$ is even.

\subsection{Coxeter groups}
We follow \cite[Chapter 2]{BB} for all Coxeter group notation and terminology. In particular, given a Coxeter system $(W,S)$, where $S$ is a set of involutions generating $W$, we can define a rank function $\ell:W\ra\n$ by $\ell(w)=$ minimal number of generators required to write $w$.  Define the left descent set $D_L(W):=\{s\in S:\ell(sw)<\ell(w)\}$, and similarly the right descent set $D_R(w)$.  For $v,w\in W$ with $v\leq w$, let $\ell(v,w):=\ell(w)-\ell(v)$.  Denote the identity of  $W$  by  $e$ . If $J\subset S$, let $W_J$, the parabolic subgroup of  $W$  generated by $J$, be the system $(\langle J\rangle,J)$, and let 
\[W/W_J:=\{w\in W:\ell(sw)>\ell(w)\text{ for all }s\in J\}.\]
We will identify $W/W_J$ with its minimal right coset representatives for the remainder of this paper, and write $W^J$ for $W/W_J$.  Then we have a well-defined length function $\ell_{W^J}(wW_J):=\ell(w)$, where $w$ is the minimal length right coset representative of $W/W_J$.  This length function makes $W^J$ into a ranked chained poset \cite[Section 2.5]{BB}.  All Coxeter groups appearing in this paper are crystallographic, and therefore are Weyl groups.

\subsection{Bruhat order}
Weyl groups bear an important order, the (strong) Bruhat order, which can be realized in several ways.  The general notion for covering in the strong Bruhat order is that $u\triangleleft_\text{Bruhat} v$ provided $\ell(u)<\ell(v)$ and there exists a reflection $t=wsw^{-1}$, $s\in S$, such that $tu=v$.  The strong Bruhat order is the transitive closure of these covering relations.  An equivalent characterization of the strong Bruhat order is the subword order.

\begin{thm}\label{subcrit}
If $u,v\in W$, then $u\leq v$ if and only if there is a reduced decomposition of $v$ such that a reduced decomposition for $u$ can be obtained by deleting $\ell(v)-\ell(u)$ simple reflections.
\end{thm}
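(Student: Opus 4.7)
The plan is to prove both directions by induction on lengths, using the Lifting Property as the main lever: namely, if $u\leq v$ and $s\in D_R(v)$, then $us\leq v$ and $u\leq vs$, with refinements depending on whether $s\in D_R(u)$ as well. Lifting is traditionally derived from the Strong Exchange Condition, which is a consequence of the reflection representation of $W$; I would take both as available background facts, since the theorem being proved is the standard subword criterion (Theorem 2.2.2 of Bj\"orner--Brenti).

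For the ``only if'' direction, I would induct on $\ell(v)$. The case $v=e$ forces $u=e$ and is immediate. For the inductive step, choose any $s\in D_R(v)$. If $s\in D_R(u)$, Lifting gives $us\leq vs$; the inductive hypothesis applied to this pair produces a reduced expression for $vs$ from which a reduced expression for $us$ can be extracted by deleting $\ell(v)-\ell(u)$ letters, and appending $s$ to both yields the required pair of reduced decompositions for $v$ and $u$. If instead $s\notin D_R(u)$, Lifting gives $u\leq vs$; the inductive hypothesis again supplies reduced decompositions for $vs$ and $u$ with the right subword property, and appending $s$ only on the $v$-side (leaving the subword for $u$ untouched) finishes the step.

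For the ``if'' direction, I would also induct on $\ell(v)$. Suppose $v=s_1\cdots s_k$ is reduced and $u=s_{i_1}\cdots s_{i_r}$ is a reduced subword expression of length $r=\ell(u)$. Examine the last deletion: if $i_r<k$, then $u$ is already a reduced subword of the shorter reduced expression $s_1\cdots s_{k-1}$, which is a Bruhat cover of $v$ (lengths differ by one and the quotient $v(s_1\cdots s_{k-1})^{-1}$ is a conjugate of $s_k$, hence a reflection), so induction and transitivity give $u\leq v$. If $i_r=k$, write $u=u's_k$ with $u'$ a reduced subword of $s_1\cdots s_{k-1}=vs_k$; induction gives $u'\leq vs_k$, and one of the Lifting variants (chosen according to whether $s_k$ is a right descent of $u'$) concludes $u=u's_k\leq v$.

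The main obstacle is really hidden upstream, in establishing the Lifting Property cleanly; once that is in place the inductions are routine case analyses on descent sets. A secondary care-point is making sure at each inductive step that the ``subword'' being produced is itself a reduced expression for the corresponding element, not merely a subword that multiplies out correctly, so that the count of deleted letters is exactly $\ell(v)-\ell(u)$ as the theorem demands.
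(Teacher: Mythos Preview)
Your sketch is correct and is essentially the standard argument. The paper, however, does not supply an independent proof of this theorem at all: its entire proof is the single line ``See \cite[Theorem 2.2.2]{BB}.'' So there is nothing in the paper to compare against beyond the citation itself, and what you have written is precisely (a sketch of) the proof in the cited reference, via the Lifting Property and induction on $\ell(v)$.

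One small simplification you could make in the ``if'' direction, case $i_r=k$: since $u=u's_k$ is reduced, necessarily $s_k\notin D_R(u')$, so no case split on the descent status of $u'$ is needed; from $u'\leq vs_k<v$ and $s_k\in D_R(v)$, Lifting immediately yields $u's_k\leq v$.
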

\begin{proof}
See  \cite[Theorem 2.2.2]{BB}.
\end{proof}

A second order on Coxeter groups also exists, called the (left) weak Bruhat order.  The covering relations are given by $u\triangleleft_\text{left weak} v$ provided $\ell(u)<\ell(v)$ and there exists $s\in S$ such that $su=v$, and the weak Bruhat order is the transitive closure of these covering relations.  

Both the weak and strong Bruhat orders descend to quotients of Coxeter groups by restricting to minimal coset representatives.  It is well known that the strong and weak Bruhat order coincide provided $W^J$ is a Hermitian symmetric group, see \cite[Section 2]{Boe}.  By abuse of notation, we use $\leq$ for all Bruhat orders in this paper.

\subsection{Relative $R$-polynomials}
The object of study in this paper is a collection of polynomials introduced by Deodhar in \cite[Lemma 2.8]{Deodhar2}.
\begin{thm}[Deodhar]
Let $(W,S)$ be a Coxeter system, and $J\subseteq S$.  For each $x\in \{-1,q\}$, there is a unique set of polynomials $\{R^{J,x}_{u,v}(q)\}_{u,v\in W^J}\subset\z[q]$ such that , for all $u,v\in W^J$,
\begin{enumerate}
\item $R_{u,v}^{J,x}(q)=0$ if $u\not\leq v$,
\item $R_{u,u}^{J,x}(q)=1$,
\item if $s\in D_L(v)$ and $u<v$ 
\[R_{u,v}^{J,x}(q)=\begin{cases}R_{su,sv}^{J,x}(q)&\text{ if }su<u\\(q-1-x)R_{u,sv}^{J,x}(q)&\text{ if }u<su\notin W^J\\(q-1)R_{u,sv}^{J,x}(q)+qR_{su,sv}^{J,x}(q)&\text{ if }u<su\in W^J\end{cases}\]
\end{enumerate}
\end{thm}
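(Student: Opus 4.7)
The plan is to establish both existence and uniqueness simultaneously by strong induction on $\ell(v)$. The base case $v=e$ is forced: condition (2) fixes $R^{J,x}_{e,e}(q)=1$, and condition (1) fixes $R^{J,x}_{u,e}(q)=0$ for all $u\neq e$ in $W^J$, so the polynomials at this level exist and are unique.

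For the inductive step, fix $v\in W^J$ with $\ell(v)\geq 1$ and pick any $s\in D_L(v)$, which is nonempty since $v\neq e$. Then $sv\in W^J$ with $\ell(sv)=\ell(v)-1$, so $R^{J,x}_{u',sv}(q)$ is given by the inductive hypothesis for every $u'\in W^J$. For each $u\in W^J$ with $u\leq v$, the three cases in condition (3) are mutually exclusive and exhaust the possibilities for the pair $(u,su)$, so condition (3) defines $R^{J,x}_{u,v}(q)$ as an explicit $\z[q]$-linear combination of known polynomials. This gives existence; uniqueness follows from the same recursion applied to any family satisfying (1)--(3), together with the base case.

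The main obstacle, and the only nontrivial step, is to show the polynomial produced does not depend on the choice of $s\in D_L(v)$. The strategy is a finite case analysis: given distinct $s,s'\in D_L(v)$, apply the recursion with $s$ followed (at the level of $sv$) by $s'$, and symmetrically with $s'$ followed by $s$. Both reduction paths terminate in polynomials $R^{J,x}_{u',w}(q)$ for $w$ obtained from $v$ by stripping a short alternating $s,s'$-word, and the cases one must check are controlled by the relative positions of $u,su,s'u,ss'u,\ldots$ in the Bruhat order and their membership in $W^J$. In each case, the exchange condition and the lifting property force the $\z[q]$-coefficients along the two reduction paths to match. The parameter $x\in\{-1,q\}$ enters only as a scalar in the middle branch of condition (3) and does not affect the combinatorial bookkeeping, so both values of $x$ can be treated in parallel. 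This descent-independence is the essential content of Deodhar's Lemma 2.8 in \cite{Deodhar2}, and is what one has to grind out to complete the proof.
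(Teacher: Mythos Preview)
The paper does not prove this theorem; it is stated as Deodhar's result and attributed to \cite[Lemma~2.8]{Deodhar2}, with no argument given. So there is no ``paper's own proof'' to compare against.

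Your outline is the standard one and matches Deodhar's original argument in spirit: induct on $\ell(v)$, use the recursion in (3) to define $R^{J,x}_{u,v}(q)$ from shorter data, and verify that the result is independent of the chosen $s\in D_L(v)$. You correctly identify descent-independence as the only nontrivial point. Note, however, that your write-up stops short of actually carrying it out: the phrase ``the exchange condition and the lifting property force the $\z[q]$-coefficients along the two reduction paths to match'' is an assertion, not an argument, and the case analysis (depending on the order $m(s,s')$ and on which of $u,su,s'u,\dots$ lie in $W^J$) is where all the work is. Since you yourself cite Deodhar's Lemma~2.8 for this step, your proposal is effectively the same deferral the paper makes, just with more scaffolding around it. If the intent was to give a self-contained proof, the descent-independence check still needs to be written out.
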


The polynomials from the above theorem are the parabolic (relative) $R$-polynomials of $W^J$ of type $x$. If $J=\emptyset$, it is trivial to verify that $R^{\emptyset,-1}_{u,v}(q)=R_{u,v}^{\emptyset,q}=R_{u,v}(q)$, where $R_{u,v}(q)$ is the standard $R$-polynomial as defined in \cite[Theorem 5.1.1]{BB}.  

In this paper, we will be working only with the polynomials for which $x=-1$.

\section{Essential definitions}\label{Sdiagrams}
\subsection{Diagrams of Hermitian type}
\begin{defn}\label{HSDiag}
We refer to the following diagrams as the diagrams of Hermitian type:
\[\begin{array}{ccc}
\begin{array}{|c|c|c|}\hline p&\lps&1\\\hline p+1&\cdots&2\\\hline\multicolumn{3}{|c|} \vdots\\\hline n&\cdots&n-p+1\\\hline \end{array}&
\begin{array}{|c|c|c|c|}\hline n&n-1&\cdots&1\\\cline{1-4}\multicolumn{1}{c|}{}&n&\cdots &2\\\cline{2-4}\multicolumn{2}{c|}{}&\multicolumn{2}{|c|}{\vdots}\\\cline{3
-4}\multicolumn{3}{c|}{}&n\\\cline{4-4}\end{array}&
\begin{array}{|c|c|c|c|}\cline{1-4} 1&\cdots&n-1&n\\\hline\multicolumn{3}{c|}{}&n-1\\\cline{4-4}\multicolumn{3}{c|}{}&\vdots\\\cline{4-4}\multicolumn{3}{c|}{}& 1\\\cline{4-4}\end{array}\\
\vspace*{.5in}A_n/A_p\times A_{n-p-1}&C_n/A_{n-1}&B_n/B_{n-1}\\
\end{array}\]

\[\begin{array}{cc}
\begin{array}{|c|c|c|c|c|}\hline n&n-2&n-3&\cdots &1\\\hline\multicolumn{1}{c|}{}&n-1&n-2&\cdots&2\\\cline{2-5}\multicolumn{2}{c|}{}&n&\cdots&3\\\cline{3-5}\multicolumn{3}{c|}{}&\multicolumn{2}{|c|}{\ddots}\\\cline{4-4}\multicolumn{4}{c|}{}&\\\cline{5-5}\end{array}&
\begin{array}{|c|c|c|c|c|}\hline 1&2&\cdots&n-2&n\\\hline\multicolumn{3}{c|}{}&n-1&n-2\\\cline{4-5}\multicolumn{4}{c|}{}&n-3\\\cline{5-5}\multicolumn{4}{c|}{}&\vdots\\\cline{5-5}\multicolumn{4}{c|}{}&1\\\cline{5-5}\end{array}\\
D_n/A_{n-1}&D_n/D_{n-1}
\end{array}\]

\[\begin{array}{cc}
\begin{array}{|c|c|c|c|c|c|c|c|}\cline{1-5}1&3&4&5&6&\multicolumn{3}{|c}{}\\\cline{1-5}\multicolumn{2}{c|}{}&2&4&5&\multicolumn{3}{|c}{}\\\cline{3-6}\multicolumn{3}{c|}{}&3&4&2&\multicolumn{2}{|c}{}\\\cline{4-8}\multicolumn{3}{c|}{}&1&3&4&5&6\\\cline{4-8}\end{array}&
\begin{array}{|c|c|c|c|c|c|c|c|c|}\cline{1-6}7&6&5&4&3&1&\multicolumn{3}{|c}{}\\\cline{1-6}\multicolumn{3}{c|}{}&2&4&3&\multicolumn{3}{|c}{}\\\cline{4-7}\multicolumn{4}{c|}{}&5&4&2&\multicolumn{2}{|c}{}\\\cline{5-9}\multicolumn{4}{c|}{}&6&5&4&3&1\\\cline{5-9}\multicolumn{4}{c|}{}&7&6&5&4&3\\\cline{5-9}\multicolumn{7}{c|}{}&2&4\\\cline{8-9}\multicolumn{8}{c|}{}&5\\\cline{9-9}\multicolumn{8}{c|}{}&6\\\cline{9-9}\multicolumn{8}{c|}{}&7\\\cline{9-9}\end{array}\\
E_6/D_5&E_7/E_6
\end{array}
\]
\end{defn}

Turning a diagram of any above type counterclockwise by $135^\circ$ yields a lattice that is the Hasse diagram of the Bruhat order of the specified quotient.  This fact was shown by Proctor, and the following is essentially a restatement of \cite{Proctor1}, Proposition 3.2.

\begin{prop}[Proctor]
The subdiagrams of $W/{W}_J$ correspond bijectively to the elements of $W/{W}_J$, for each pair $W/W_J$ listed above.
\end{prop}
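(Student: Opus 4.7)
The plan is to construct an explicit bijection $M \mapsto w(M)$ between subdiagrams $M$ of the Hermitian diagram $\Lambda_{W^J}$ and elements $w \in W^J$, by reading a reduced decomposition directly off the box labels of $M$. First I would formalize the notion of subdiagram: give the boxes of $\Lambda_{W^J}$ the partial order in which one box precedes another when the second can be reached by a sequence of edge-adjacent moves each going south or east, and define a subdiagram to be an order ideal with respect to this order. Pictorially, a subdiagram is a NW-anchored subshape closed under deletion of its outer SE boxes. Each subdiagram $M$ comes with a canonical linear reading order --- top-to-bottom by rows, left-to-right within each row (adapted to the orientation of the specific diagram in question) --- and I define $w(M)$ by concatenating the box labels of $M$ in this order.

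The key claim, proved by induction on $|M|$, is that $w(M)$ is a reduced decomposition of an element of $W^J$. For the inductive step, remove an outer corner box $b$ of $M$, labeled by some $s \in S$, to obtain a smaller subdiagram $M'$; by hypothesis $w(M')$ is a reduced decomposition of some $v \in W^J$. Inserting $s$ at the slot prescribed by $b$ and using the fact that the labels of the Hermitian diagrams precisely encode the commutation and braid relations between generators at adjacent boxes, one shuffles $s$ leftward through $w(M')$ to conclude $w(M) = s \cdot w(M')$ with $\ell(w(M)) = \ell(v) + 1$. Injectivity follows by inverting this procedure: the leftmost letter of $w(M)$ singles out a unique outer corner of $M$, and iteration peels $M$ off uniquely. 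For surjectivity, induct on $\ell(w)$: given $s \in D_L(w)$, the element $sw$ again lies in $W^J$ --- a feature of the Hermitian symmetric setting used implicitly in the paper via the coincidence of the left weak and strong Bruhat orders on $W^J$ --- so by induction $sw = w(M')$, and adjoining the appropriate box labeled $s$ to $M'$ produces a subdiagram whose associated element is $w$.

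The main obstacle will be confirming that the label patterns actually encode the necessary commutation and braid relations, particularly for the exceptional types $E_6/D_5$ and $E_7/E_6$ where the labels are not a simple arithmetic function of box coordinates. For these cases one can either verify the required relations directly on the explicit (and finite) diagrams pictured in Definition \ref{HSDiag}, or import Proctor's uniform combinatorial game from \cite{Proctor2}, whose allowable moves are precisely the operation of adjoining a box at an outer corner of an existing subdiagram, so that the bijection is built by the game itself.
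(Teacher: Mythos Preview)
Your approach is sound and essentially correct, but it differs from what the paper does. The paper does not prove this proposition directly: it attributes the result to Proctor and then, in the following section, sketches an alternative construction via a \emph{sorting game} on the permutation representation of $W$. That construction runs in the opposite direction from yours: starting from $w\in W^J$ written as a signed permutation, one performs a prescribed sequence of sorts, records the jump counts as a partition, and observes that each sort is realized by a specific row word of simple reflections. The paper carries this out in full only for $A_n/A_{p-1}\times A_{n-p}$ and sketches the remaining classical types. Your map, by contrast, goes diagram $\to$ element by reading labels, and your inductive verification that the word is reduced and lies in $W^J$ is essentially the content the paper postpones to Proposition~\ref{ordertheorem}. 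In effect you are reordering the paper's logic: establish the commutation property of outer-corner labels first, then derive the bijection from it, rather than building the bijection from permutation data and deducing the descent/corner correspondence afterward. Your route is more intrinsic and closer in spirit to the numbers-game proof of \cite{Proctor2} that the paper cites; the paper's route is more concrete but explicitly type-by-type.

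One small point to tighten: in your surjectivity step you write ``given $s\in D_L(w)$, the element $sw$ again lies in $W^J$.'' As stated this asserts the claim for \emph{every} left descent, which is true in the cominuscule setting but does not follow immediately from the coincidence of weak and strong orders on $W^J$ (that statement only compares elements already known to lie in $W^J$). What the coincidence does give you, via the gradedness of $(W^J,\leq)$, is the \emph{existence} of some $s\in D_L(w)$ with $sw\in W^J$, and existence is all your induction needs. Likewise, your injectivity sketch (``the leftmost letter singles out a unique outer corner'') implicitly uses that distinct outer corners carry distinct labels; this is true for all the diagrams in Definition~\ref{HSDiag} but should be stated as part of the label-structure verification you already flag as the main obstacle.
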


Proctor provided a different proof of this result in \cite[Theorem A]{Proctor2} using a modification of a numbers game of Mozes.  Alternatively, we could show the result using a sorting game that is essentially dual to Proctor's method.  We give the details in $A_n/A_{p-1}\times A_{n-p}$ in the following section.

These Hasse diagrams appeared almost simultaneously in \cite[Section 4]{Jakobsen} as the noncompact root lattices associated with unitarizable highest weight modules. 

\subsection{Diagrams and partitions}
The following definition is fundamental to this paper.

\begin{defn}\label{diagrams}
A {\it subdiagram} of one of the above Hermitian type diagrams is a subset of the given diagram such that, if a box $b$ is in the subdiagram, every box located above and left of $b$ is also in the subdiagram.  \end{defn}

Importantly, diagrams retain the labels, which we think of as corresponding to simple reflections in the standard (Bourbaki) ordering of the generators $S$ of the Weyl group $W$, imagining $J\subset W'$ as laying in $S$ in the canonical way.

\begin{defn}
A {\it partition } is a diagram with its labels removed.  We refer to the partition $\l$ underlying a diagram $\Lambda$ as the shape of $\Lambda$.
\end{defn}

Then the subdiagrams of the above can be interpreted in the following way:  each $w\in W^J$ corresponds to a partition of $\ell(w)$ of type $W^J$ with the following properties:\\

\begin{tabular}{|c|c|}\hline
Quotient&partition description\\\hline
$A_n/A_{p-1}\times A_{n-p}$ & $\l=(\l_1,\dots,\l_k)$, $k\leq n+1-p$, $\l_i\leq p$,\\
& where $s_p$ is the unique simple reflection of  $s$  not contained in $J$.\\\hline
$C_n/A_{n-1}$ & $\l=(\l_1,\dots,\l_m)$, $n\geq \l_1>\l_2\cdots>\l_m> 0$.\\\hline
$B_n/B_{n-1}$ &  $\l=(m)$, $m\leq n+1$, or $\l=(n+1,1,1,\dots, 1)$.\\\hline
$D_n/A_{n-1}$: & $\l=(\l_1,\dots,\l_m)$, $n\geq \l_1>\l_2\cdots>\l_m> 0$.\\\hline
$D_n/D_{n-1}$: & $ \l=(m)$, $m\leq n$, or $\l=(n,m)$, $m\in\{1,2\}$ or $\l=(n,2,1,1,\dots,1)$.\\\hline
$E_6/D_5, E_7/E_6$ & irregular, best considered individually.\\\hline
\end{tabular}\\\\
Furthermore, because we require subdiagrams to be upper left justified, knowing a partition and its type is enough to construct the diagram for that partition.  

In this light, it makes sense to make the following definition:

\begin{defn}
The {\it partition of a permutation $w$ } is the partition underlying the diagram associated to $w$ through the bijection detailed in the following section.
\end{defn}

The next definitions will only be used in Section \ref{CombInv}, but we state them now for completeness.

\begin{defn}
We say that a partition is {\it standard } if it is a Young tableau under the usual definition (i.e. consists of a top and left justified set of rows, weakly decreasing in length).  We say that a partition is {\it standard skew } if it can be written as a skew tableau of two standard tableaus.  Note that all standard tableaus $\mu$ are standard skew, realized as $\mu\bs\emptyset$.  
\end{defn}

\begin{defn}
We say that $W^J$ {\it supports a diagram of shape $\l/\mu$ } if $\l\bs \mu$ corresponds to $\Lambda\bs M$, with $\Lambda$ the diagram of $v$ in $W^J$, and $M$ the diagram of $u$. Given $u<v$ in $W^J$ with diagrams $M$ and $\Lambda$, respectively, we refer to $\Lambda\bs M$ as a skew diagram of type $W^J$, or of type $W$, when $J$ is clear from context.  The support for the skew diagram for $v\bs u$ where $u$ and $v$ are specific elements of some quotient $W^J$ is the set of simple reflections that appear in the filling of $v\bs u$.
\end{defn}

\section{The sorting game}
\subsection{The idea of the algorithm}
An alternate realization of the diagrams of Hermitian type can be had by modifying a sorting game developed by Eriksson and Eriksson \cite[Section 9]{EE}.  We begin by noting that each Weyl group can be associated to a permutation group:  $A_n$ with the symmetric group on $n+1$ letters, etc.  We realize each of the above quotients in their permutation representation, using the permutation representation described in \cite[Section 6.2]{HE} for $E_6/D_5$ and $E_7/E_6$ and the standard representations described in \cite[Chapter 8]{BB} for the others, then use a sorting argument to construct tableau.  The rows in the constructed tableau correspond to rows in the above diagrams in a natural way, and because we are working in Hermitian symmetric spaces, the Bruhat and weak Bruhat orders coincide on these quotients, allowing us to read the left descent sets of each element from the diagram (see Proposition \ref{ordertheorem}).  Furthermore, because the sorting game is modeling multiplication in the Weyl group, the diagrams we derive can be associated to reduced decompositions.  This will be a central tool in the statement and proof of the main theorem.  

We present a single case of the sorting game.  The rest are very similar.

\subsection{Case: $A_n/A_{p-1}\times A_{n-p}$}
Suppose $W=A_n$ with the standard generating set $S$ of adjacent transpositions, and $J=S\bs\{s_p\}$.  Then $W^J$ consists of permutations increasing in positions 1 through $p$ (called segment one), and in positions $p+1$ through $n+1$ (called segment two).  Write $w(j)$ for the image of $w$ under the action of the permutation $w$; if $w$ is written in complete notation, we see this is just the $j^\text{th}$ entry in $w$.  Define a game on $w\in W^J$ in the following way:  in step one, sort $w(p+1)$ into the segment $[w(1),\dots,w(p)]$, and record the number of elements "jumped" as $d_1$.  Call the resulting permutation $w_1$, and sort $w_1(p+2)=w(p+2)$ into the segment $[w_1(1),\dots,w_1(p+1)]$, again recording the number of jumped elements as $d_2$.  as $w(p+1)<w(p+2)$ by assumption, $d_2\leq d_1$.  Continue until the permutation is completely sorted, leaving a partition $(d_1,d_2,\dots,d_k)$.  Note that $0\leq k\leq n+1-p$, and that $k=0$ if and only if  $w$  was sorted before any action was taken (which implies that  $w$  is the identity).  

Each sort can be realized as multiplication by a specific product of simple reflections; in particular, the action of sorting position $p+k+1$ to position $p-j$ is $s_{p+k}s_{p+k-1}\cdots s_{p-j}$, corresponding to the diagram row word
\[r(p,\l_k):=\begin{array}{|c|c|c|c|}\hline s_{p+k}&\cdots& s_{p-j+1}&s_{p-j}\\\hline\end{array}.\]
Thus we can associate to each partition an element of $W^J$ by 
\[w=r(p,\l_k)^{-1}r(p,\l_{k-1})^{-1}\cdots r(p,\l_1)^{-1},\] 
and so we have a map
\[\phi:A_n/(A_{p-1}\times A_{n-p})\mapsto\{\l=(\l_1,\dots,\l_k),|\l|=\ell(w), \l_1\leq p, k\leq n+1-p\}\]
where the labeling is exactly as in Section \ref{HSDiag}.  This map is easily shown to be bijective.

\begin{rem} 
All further cases are similar, and so we give only the map 
\[\phi:W^J\mapsto\{\text{partitions with particular conditions}\}.\]
\end{rem}

\subsection{$C_n/A_{n-1}$}  Here,  $W$  can be viewed as Weyl group of type $C_n$, i.e. permutations of $\{-n,\dots,-1,1,\dots,n\}$ with $w(i)=-w(-i)$, and where we take $J=A_{n-1}$.  We see that $W^J:=\{w\in C_n:\exists!1\leq p\leq n\text{ with }w(p)>0,w(p)<0,\text{ and }w(1)<\cdots< w(p),w(p+1)<\cdots w(n)<0\}$.  Note that $w(n)>0$ if and only if  $w$  is the identity in this case.  The algorithm is as follows:  set $w'(n)=-w(n)$, and sort it into $[w(1),w(2),\dots,w(n-1)]$, recording $d_1$ to be one more than the number of jumped elements, and call the resulting permutation $w_1$.  Continue until $w_k(n)>0$.  Then $\phi(w)=(d_1,d_2,\dots, d_k)$.  As we assumed that $w(i)<w(j)<0$, and $w(i)$ is sorted before $w(j)$, then $w(i)$ is moved more positions to the left than $w(j)$, showing that $d_j<d_i$ for $j>i$.  Thus we have a map 
\[\phi:C_n/A_{n-1}\mapsto\{\l=(\l_1,\dots,\l_k),|\l|=\ell(W), \l_i<\l_j\text{ if }i<j, \l_1\leq n\}\]

Using a row word argument with the labels fixed as before, this map is bijective.

\subsection{$B_n/B_{n-1}$:}  Take $W=B_n$, and $J$ the standard generating set of $B_{n-1}$ chosen such that it lies within the standard generating set for $B_n$, with the same enumeration.  Recall that as permutation groups, $B_n$ and $C_n$ are isomorphic, so $w\in B_n$ is a permutation of $[-n,\cdots,-1,1,\cdots,n]$ such that $w(i)=-w(-i)$ for $1\leq i\leq n$.  Thus we can write $w$ as $[w(1),\dots, w(n)]$, and $W^J=\{w\in B_n:0<w(2)<\cdots<w(n)\}$.  The only negative element, if there is one, is in position $1$.  The sort has two parts:  first, if $w(1)>0$, sort $w(1)$ into the segment $\{w(2),\dots,w(n)\}$, recording the number of right jumps as $d_1$, and the algorithm terminates. If $w(1)<0$, let $w_1$ be $(w(2),\dots, w(n),-w(1))$, recording $d_1=n$.  Next, if necessary, sort $\{w_1(n-1),w_1(n)\}$, recording the number of left jumps of $w_1(2)$ as $d_2$.  Continue sorting larger terminal segments in increments of one; note that as we increase the size, we can at most move an entry one position, thus $d_k\in\{0,1\}$ for $k>1$.  The algorithm terminates when $d_k=0$.

\subsection{$D_n/A_{n-1}$}  Here  $W$  is Weyl group of type $D_n$, i.e. permutations of $-[n]\cup[n]$ with an even number of negative entries in positions 1 through  $n$, and with $w(i)=-w(-i)$ for $1\leq i\leq n$.  Again we write $w\in D_n$ as $[w(1),\dots, w(n)]$. It is easy to see that $W^J:=\{w\in D_n:\exists!1\leq p\leq n\text{ with }w(p)>0,w(p)<0,\text{ and }w(1)<\cdots< w(p),w(p+1)<\cdots w(n)<0\}$.  Given a permutation  $w$ , we have a two-step method of sorting:  first, let $w'(n)=-w(n-1)$, and $w'(n-1)=-w(n)$, and begin by sorting $w'(n-1)$ into $\{w'(1),w'(2),w(3),\dots, w(n)\}$, noting $d_1$ as one more than the number of jumped elements, and denote the resulting permutation as $w''$ (note that $w''\notin W^J$).  Then sort $w'(n)$ into $\{w"(1),w''(2)\cdots,w''(n-1)\}$, letting $d_2$ be the number of jumped elements.  Again, by our assumptions $d_2<d_1$ (as $w'(n)<w'(n-1)$, implying it is sorted fewer steps right).  Continue the same process with the resulting permutation.  Note that $d_{2k}$ may be zero for some $k$, and this may happen only if there are no remaining negative entries.  The process halts when no negative entries remain at the beginning of an odd step.  This gives a map  
\[\phi:D_n/A_{n-1}\mapsto\{\l=(\l_1,\dots,\l_k):|\l|=\ell(w), \l_i<\l_j\text{ if }i<j, \l_1\leq n-1\}.\]

\subsection{$D_n/D_{n-1}$}  In this case, the Bourbaki ordering of simple reflections admits no ``nice" combinatorial interpretation in terms of the game we have presented so far. Instead, we present the proof using the ordering given in \cite{BB}.  Explicitly, if we denote the Bj\"orner and Brenti enumeration of generators $b_i$, $0\leq i\leq n-1$, and the Bourbaki ordering $s_i$, $1\leq i\leq n$, we have that $b_i=s_i$ for $1\leq i\leq n-1$, and $b_0$ acts as the cycles $(-1,2)(1,-2)$, where $s_n$ acts as the cycles $(n-1,-n)(n,-n+1)$.  This is effectively a diagram bijection carrying $s_i\leftrightarrow s_{n-i}$.  The difference is purely cosmetic, but it allows an easier statement of the sorting algorithm.

Take $W=D_n$, and $J$ the generating set of $D_{n-1}$ chosen such that it lies within the standard generating set for $D_n$, with the same enumeration.  Then $W^J=\{w\in D_n:w(-2)<w(1)<\cdots<w(n-1)\}$.  Again, the sorting process has several steps:  begin by sorting $w(n)$ into $\{w(1),\dots, w(n-1)\}$.  We observe that this step is trivial only in the case $w=e$, as otherwise $w(n)<w(n-1)$ as is easily verified from the definition of the quotient.  Explicitly, $w(-2)<w(1)<w(2)$ implies that if any negatives appear in positions 1 through  $n$  of  $w$ , than they must be in positions $1$ and  $n$ , implying that $w(n)<0<w(n-1)$; if no negatives appear, then $\sigma_1=\{w(1),\dots, w(n-1)\}$ represents an ascending sequence taken from $[n]$, and it is only the trivial sequence that fails to utilize  $n$  in $\sigma_1$, showing $w(n)<n=w(n-1)$.  Record the number of elements jumped by $w(n)$ as it moves left as $d_1$.  

If there are no negative entries, then the resulting sequence is sorted, and the process terminates.  Otherwise, denote the resulting permutation as $w_1$, and let $w'(1)=-w_1(2)$, $w'(2)=-w_1(1)$.  

Sort $w_1(3)$ into the segment $\{w'(1),w'(2)\}$ recording one more than the number of jumped elements as $d_2$ (note that it is possible that no jump occurs). We claim that $1\leq d_2\leq 2$:  to see this, note first that $w(-2)<w(1)<w(2)$ implies $w(-2)<w(-1)=-w(1)<w(2)$, and also that, as we must assume $w(n)<0$ to have reached this step, we know that the sets $\{w'(1),w'(2)\}=\{-w(1),-w(n)\}$.  Hence sorting $w(2)$ into $\{w'(1),w'(2)\}$, we can't pass $-w(1)$, meaning we can jump at most one element.

Denote the resulting permutation as $w_2$, and jump $w_2(4)$ into $\{w_2(1),w_2(2),w_2(3)\}$, recording $d_3$ as the number of left jumps; again, and for the same reason, $d_3\leq 0$.  Continue until no left jump is possible, at which point in time the sequence is sorted.  The partition is $(d_1,d_2,\dots, d_k)$, and labeling as in the figure realize the sorts involved, giving the bijection as before.

\subsection{$E_6/D_5$ and $E_7/E_6$}
These cases are best checked by hand, using the permutation descriptions of \cite{HE}/

\begin{rem} 
The row words that appear in each case give the labeling of the diagrams in \ref{HSDiag}.  
\end{rem}

\section{Order and descents}
\subsection{Young's lattice}
Recall Young's lattice on partitions: two partitions $\mu=(\mu_1,\dots,\mu_r)$ and $\l=(\l_1,\dots, \l_s)$ are related by $\mu<\l$ provided $r\leq s$, and that for $1\leq i\leq r$ $\mu_i\leq \l_i$.  Visually, if the partitions are represented as Young diagrams, this just says that the diagram of $\mu$ sits inside that of $\l$ when their upper left corners coincide.  

In the current context, if $M$ and $\Lambda$ are diagrams of type $W^J$ with shapes $\mu:=(\mu_1,\dots,\mu_r)$ and $\lambda:=(\l_1,\dots,\l_s)$ respectively, then the labels in their upper left corners again coincide, and we again say that $M\leq_\text{diagram}\Lambda$ provided $\mu_i\leq\l_i$ for each $1\leq i\leq r\leq s$.  We call this the containment order on $W^J$, or just the containment order when the context is clear.  It is a consequence of the definition of subdiagrams that $W^J$ is a poset with respect to this ordering.

\subsection{Diagrams and order}
Given a diagram $\Lambda$ of type $W^J$, we can visually distinguish the ascents and descents of $w\in W^J$ associated to $\Lambda$.

\begin{prop}\label{ordertheorem}
Let $u\in W^J$ with diagram $\Lambda$, and suppose that $su\in W^J$ with diagram $M$.  Then
$s\in D_L(u)$ if and only if the skew diagram $\Lambda\bs M$ consists of a single box, positioned in an outside corner of $\Lambda$.  Similarly, if $s\notin D_L(u)$, then $M\bs\Lambda$ is a single box that occupies an outside corner of $M$.
\end{prop}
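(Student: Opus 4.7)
The plan is to leverage three ingredients about the sorting-game bijection $\phi \colon W^J \to \{\text{subdiagrams of type } W^J\}$ constructed in Section 3: first, $\phi$ is length-preserving, so $|\phi(w)| = \ell(w)$; second, $\phi$ carries Bruhat order on $W^J$ to containment order on diagrams, by Proctor's Proposition 3.2 combined with the sorting description; and third, in the Hermitian symmetric case the left weak and strong Bruhat orders on $W^J$ coincide, as cited in Section 2.3. Together these reduce the proposition to a statement about the diagrammatic effect of a single weak-Bruhat cover.

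In the descent case, $s \in D_L(u)$ gives $\ell(su) = \ell(u) - 1$, so $|M| = |\Lambda| - 1$. Left multiplication by $s$ gives $su \triangleleft u$ in left weak Bruhat, hence in strong Bruhat on $W^J$, hence $M \subsetneq \Lambda$ as subdiagrams by the containment interpretation. Combined with the one-box length difference, $\Lambda \bs M$ consists of a single box $b$. If $b$ had a neighbor immediately to its south or east in $\Lambda$, then removing $b$ would destroy the upper-left-justified condition required by Definition \ref{diagrams}, contradicting that $M$ is a valid subdiagram; thus $b$ must sit at an outside corner of $\Lambda$. The ascent case $s \notin D_L(u)$ with $su \in W^J$ is handled symmetrically, with the roles of $M$ and $\Lambda$ swapped.

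The main obstacle is making ingredient two precise, namely that $\phi$ intertwines Bruhat covers with single-box outside-corner moves. Although Proctor's result provides the bijection $\phi$, upgrading it to a cover-by-cover statement requires checking, type by type, using the row-word data produced by the sorting algorithm in Section 3. In the classical types $A_n/A_{p-1}\times A_{n-p}$, $B_n/B_{n-1}$, $C_n/A_{n-1}$, $D_n/A_{n-1}$, and $D_n/D_{n-1}$, one inspects the sorting procedure directly to see that canceling or prepending $s$ on the left either trims or appends exactly one extremal box and that this box carries the label $s$. For $E_6/D_5$ and $E_7/E_6$ the verification reduces to a finite hand computation using the explicit diagrams of Definition \ref{HSDiag} together with the permutation descriptions of \cite{HE}. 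Once these case checks are in hand, the proposition follows immediately from the three ingredients above.
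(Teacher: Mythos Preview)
Your argument is correct but takes a different route from the paper's. The paper argues directly with the row-word reduced decomposition that the diagram encodes: if $s\in D_L(u)$, write a reduced expression $u=asb$ with $s$ not occurring in $a$ and $sa=as$; then in the row-word reading of the diagram, no simple reflection non-commuting with $s$ appears after the last $s$-labeled box, forcing that box to be an outside corner. Conversely, if an $s$-labeled box is an outside corner, $s$ commutes past every label occurring to its left in the row word, so $s$ can be brought to the front and is a left descent. This is a single uniform argument, with no type-by-type case check, and it simultaneously identifies \emph{which} box is removed (the $s$-labeled one), a fact the paper uses repeatedly later.

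Your route trades this for an appeal to the order isomorphism between Bruhat order and containment order plus a box count. That is slick for the bare shape statement, but two things are worth flagging. First, in the paper's internal logic the containment--Bruhat isomorphism is Corollary~\ref{shape}, which is \emph{derived from} the present proposition; you avoid circularity only by importing it from Proctor, and then you owe a sentence explaining why the sorting-game bijection $\phi$ agrees (as a poset map) with Proctor's. Second, your approach still needs the type-by-type verification to pin down that the removed box carries label $s$, whereas the paper's commutation argument gets this for free. So your argument is valid and conceptually clean on the shape side, but the paper's is more self-contained and more uniform.
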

\begin{proof}

If $s$ is a left descent of $u\in W^J$, then as the weak Bruhat and strong Bruhat orders coincide for Hermitian symmetric pairs, we can write any reduced decomposition of $u$ as $asb$, with $sa=as$, with $s$ not appearing in $a$.  Then in the lexicographically minimal reduced form, we can represent $u$ as $asb$ as above, and $a$ can contain no copy of $s$, and $as=sa$.  but this says no $s'$ labeled box appears in $a$, which means no $s'$ labeled box appear in the partition for $u$ after the final occurrence of an $s$-labeled box, i.e. that $s$-labeled box is an outside corner of $\Lambda$.

Conversely, if $s$ is the label of an outside corner of $\Lambda$, say on row $j$, then $s$ commutes with all labels of boxes that appear on rows lower than $j$, and nothing appears right of the last $s$-labeled box on row $j$.  Thus $s$ commutes with all simple reflections that appear left of the first occurrence of $s$ in the lexicographically minimal reduced decomposition, implying $s$ is a left descent.

The second statement is similar.
\end{proof}

\subsection{A poset isomorphism}
The following follows easily from Proposition \ref{ordertheorem}.
\begin{cor}\label{shape}
The poset $(W^J,\leq_\text{diagram})$ is poset isomorphic to $(W^J,\leq_\text{left weak})$.  Thus $u\leq v$ in $W^J$ if and only if the diagram of $u$ is contained in the diagram of $v$.
\end{cor}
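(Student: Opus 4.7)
The plan is to show that the identity map on $W^J$ carries the cover relations of $\leq_\text{left weak}$ to the cover relations of $\leq_\text{diagram}$, and that each order is the reflexive transitive closure of its covers.

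First I would unwind the cover relations on each side. In the left weak order, $u \triangleleft_\text{left weak} v$ means $v = su$ for some $s \in S$ with $\ell(v) = \ell(u)+1$; equivalently, $s \in D_L(v)$ and $u = sv \in W^J$. In the containment order on subdiagrams, $M \triangleleft_\text{diagram} \Lambda$ means $\Lambda$ is obtained from $M$ by adjoining a single box, which (since both are subdiagrams of a Hermitian diagram, hence upper-left justified) must be an outside corner of $\Lambda$. This is exactly the configuration described by both directions of Proposition \ref{ordertheorem}: the cover $u \triangleleft_\text{left weak} v$ with $s \in D_L(v)$, $u = sv$, produces the diagram pair $(M,\Lambda)$ with $\Lambda \setminus M$ a single outside corner box labeled $s$, and conversely any such box-addition comes from multiplication on the left by the simple reflection labeling that box. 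Hence the two sets of covering pairs in $W^J$ coincide.

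Next I would argue that each order is generated by its covers. For the left weak order this is immediate from the definition given in the preliminaries. For the containment order, given $M \leq_\text{diagram} \Lambda$ with shapes $\mu \subseteq \lambda$, I would add boxes of $\lambda \setminus \mu$ one at a time in any order consistent with the upper-left-justified condition (for example, scan rows top-to-bottom and within each row left-to-right), producing at each stage a valid subdiagram of the ambient Hermitian diagram whose preceding step differs by a single outside corner. This exhibits the chain of covers joining $M$ to $\Lambda$. Since both orders are graded by the common statistic $\ell(w) = |\lambda(w)|$ (established by the sorting game, where each sort step contributes one box and one simple reflection), no collapsing can occur, and the two orders are literally equal as binary relations on $W^J$.

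Finally I would combine the two preceding paragraphs: the cover relations agree and each order is the transitive closure of its covers, so $\leq_\text{left weak}$ and $\leq_\text{diagram}$ agree on $W^J$. Because the excerpt records that the strong and weak Bruhat orders coincide on Hermitian symmetric $W^J$, this also gives the stated equivalence $u \leq v \iff M \subseteq \Lambda$ for the strong order.

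The main obstacle I anticipate is the verification that an intermediate stage in the box-by-box construction of $\Lambda$ from $M$ is actually a valid subdiagram of the ambient Hermitian diagram (i.e.\ corresponds to some element of $W^J$); in types $A$, $C$, $D$, and $B$ this follows transparently from the row-length conditions listed in Section \ref{Sdiagrams}, but the irregular shapes in $E_6/D_5$ and $E_7/E_6$ require a direct inspection of the diagrams in Definition \ref{HSDiag} to confirm that the set of upper-left-justified subdiagrams is closed under adding an outside corner along any path from $\mu$ to $\lambda$.
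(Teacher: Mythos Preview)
Your proof is correct and matches the paper's approach for the forward direction (left-weak covers correspond to single-box additions via Proposition~\ref{ordertheorem}), but the paper handles the converse direction differently. Where you build an explicit chain of diagram covers from $M$ up to $\Lambda$ and then worry about whether each intermediate shape is a legitimate subdiagram of the ambient Hermitian diagram, the paper simply observes that $M\subset\Lambda$ exhibits the row word for $u$ as a subword of the row word for $v$, and then invokes the Subword Criterion (Theorem~\ref{subcrit}) to conclude $u\leq v$ in the strong Bruhat order directly. Since strong and weak Bruhat coincide on Hermitian symmetric $W^J$, this closes the loop without ever constructing intermediate diagrams.

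The payoff of the paper's route is exactly that it sidesteps the obstacle you flag at the end: there is no need to inspect the irregular $E_6/D_5$ and $E_7/E_6$ shapes to verify that a box-by-box filling stays inside the class of subdiagrams. Your approach is more symmetric and stays entirely on the combinatorial side, but the price is that small case check (which, to be fair, is immediate from Definition~\ref{diagrams}: any upper-left-justified subset of the ambient shape is by definition a subdiagram, and since every box you add lies in $\Lambda$, the intermediate shapes are automatically valid).
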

\begin{proof}
Covering relations in the left weak order are determined precisely by left simple descents.  By the above,  $s$ is a left descent of $v$ if and only if an $s$-labeled box is an outside corner in the diagram of $v$.  

If $u$ covers $v$ in the left weak order, then there exists $s\in D_L(v)$ such that $su=v$.  By Proposition \ref{ordertheorem}, there is an $s$-labelled outside corner in the diagram $\Lambda$ of $v$ that is not contained in the diagram $M$ of $u$.  Then the $s$-labeled box can be removed from $\Lambda$, leaving a subdiagram, and considering the diagrams as products of row words forming a reduced decomposition, this corresponds to left multiplication by $s$.  Thus the resulting diagram is $M$.  

Similarly, if $M\subset \Lambda$, with $M$ the diagram of $u$ and $\Lambda$ the diagram of $v$, then this presentation by diagrams witnesses that $u$ is a subword of $v$, and so $u<v$ by the Subword Criterion (Theorem \ref{subcrit}).
\end{proof}

\section{The main theorem}\label{marks}
\subsection{Marking the diagrams}
Given $u\leq v$ in $W^J$, let $\Lambda$ be the diagram of $v$, and $M$ the diagram of $u$.  As $u\leq v$, then $M\subset \Lambda$ by Corollary \ref{shape}.  By Proposition \ref{ordertheorem}, $s\in D_L(u)$ if and only if an $s$-labeled box is an outside corner of the diagram of $u$, and $su$ is a minimal coset representative covering $u$ if and only if we can append an $s$-labeled box to $M$ in such a way that the resulting diagram is a subdiagram in the sense of Definition \ref{diagrams}. 

For an $s$-labeled box in $\Lambda\bs M$, define
\[\delta(b):=\#\{s\text{-labeled boxes in $\Lambda\bs M$ above and left of $b$, including $b$}\}.\]
We define a box $b$ to be even provided $\delta(b)$ is even, and similarly for odd.

\begin{enumerate}
\item If $s\notin D_L(u)$, $su\in W^J$, and $s$ is a short (long) reflection, then mark every (odd) $s$-labeled box with a $+$.
\item If $s\in D_L(u)$, and $s$ is a short (long) reflection, then mark every (even) $s$-labeled box in $\Lambda\bs M$ with a $-$. 
\end{enumerate}

For each box $b$ in $\Lambda\bs M$, define
\[\Delta(b):=\begin{cases}\#\{\text{boxes in $\Lambda\bs M$ above and left of $b$ , including $b$}\}&\text{if $b$ marked}\\1&\text{if otherwise}\end{cases}.\]

Suppose $u<v$, and that the diagram of $v$ is $\Lambda$, and the diagram of $u$ is $M$.  Then we define $k_{u,v}:=\#\{+\text{ marked boxes in }\Lambda\bs M\}-\#\{-\text{ marked boxes}\}$.

\subsection{Statement of Main theorem}
Now we can state the main result of this paper.

\begin{thm}[Main Theorem]
Given $u\leq v$ in $W^J$ of rank $n$, let $\Lambda$ be the diagram of $v$, and $M$ the diagram for $u$, and mark the skew diagram $\Lambda\bs M$ as in Section \ref{marks}.  Define
\begin{equation}\label{MT}
\tilde R_{u,v}^J(q)=
\begin{cases}
1&\text{ if }u=v\\
0&\text{ if }u\not\leq v\\
q^{\eta_{u,v}}(q-1)^{k_{u,v}}\prod_{b\in\Lambda\bs M}\qb{\Delta(b)}^a&\text{ if }u<v,\end{cases}\end{equation}
where the second product should be interpreted as a product over all boxes in the marked diagram $\Lambda/M$, and $a$ is $+1$ for $+$ marked boxes, $-1$ for $-$ marked boxes, with $\eta(u,v)$ is the unique natural number so that the degree of the right hand side is $\ell(v)-\ell(u)$.  Then $R_{u,v}^J(q)=\tilde R_{u,v}^J(q)$.
\end{thm}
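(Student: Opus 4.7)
The plan is to induct on $\ell(v)$ and verify that $\tilde R_{u,v}^J(q)$ satisfies Deodhar's recursion for $x=-1$. The base case $\ell(v)=0$ forces $u=v=e$, and both the identity and nonexistence conditions match the definition of $\tilde R_{u,v}^J$. For the inductive step we choose $s\in D_L(v)$. By Proposition \ref{ordertheorem} combined with Corollary \ref{shape}, $s$ corresponds to an $s$-labeled outside corner $b\in\Lambda$ whose removal yields the diagram $\Lambda'$ of $sv$. It then suffices to check each of Deodhar's three branches separately.

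In \textbf{Case A} ($su<u$), by Proposition \ref{ordertheorem} the diagram $M$ likewise has an $s$-labeled outside corner $b'$, and $M':=M\setminus\{b'\}$ is the diagram of $su$. Since $b\in\Lambda\setminus M$ but $b'\notin\Lambda\setminus M$, passing to $(\Lambda',M')$ simply deletes $b$ from the skew diagram and inserts $b'$. I would check by direct inspection on each of the seven families of Definition \ref{HSDiag} that the $s$-labeled chain along the NW edge of $\Lambda\setminus M$ is merely shifted, so the $\delta$-values on $s$-boxes and the $\Delta$-values on all other boxes are preserved, as are the mark count $k_{u,v}$ and the quantity $\ell(v)-\ell(u)$ controlling $\eta$; this yields $\tilde R^J_{u,v}=\tilde R^J_{su,sv}$. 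In \textbf{Case B} ($u<su\notin W^J$), $M$ remains the diagram of $u$ and $\Lambda\setminus M=(\Lambda'\setminus M)\cup\{b\}$. The hypotheses $s\notin D_L(u)$ and $su\notin W^J$ exclude $b$ from both marking rules, so $\Delta(b)=1$ and the product is unchanged. Only $\eta$ increases by one because $\ell(v)-\ell(u)$ does, giving the required factor of $q$.

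\textbf{Case C} ($u<su\in W^J$) is the crux. Here $su$ has diagram $M''=M\cup\{c\}$ for some $s$-labeled box $c$, and $b$ is freshly added to the NW $s$-chain of $\Lambda'\setminus M''$ to form $\Lambda\setminus M$. Under the marking rules the box $b$ receives a $+$ in the simply-laced (or short-root) case, and $\Delta(b)$ equals the total number of $s$-boxes above and left of $b$ in $\Lambda\setminus M$, which is precisely one more than the analogous $\Delta$-value appearing for the terminal $s$-box of $\Lambda'\setminus M''$. After accounting for the $\eta$-shifts on both sides, the identity $\tilde R^J_{u,v}=(q-1)\tilde R^J_{u,sv}+q\tilde R^J_{su,sv}$ reduces to the $q$-integer identity $\qb{m+1}=(q-1)\qb{m}+q^m$ (or its long-root analog with the parity-adjusted brackets $\qb{m}$ of Section 2.1), multiplied by a common product over all other boxes. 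For the long-root cases in $B_n/B_{n-1}$ and $C_n/A_{n-1}$ the parity condition in the marking rule lets exactly half the boxes contribute, and the corresponding identity on even-indexed $q$-brackets takes over.

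The main obstacle is \textbf{Case C}: one must verify that, uniformly across all seven Hermitian types (including the exceptional $E_6/D_5$ and $E_7/E_6$ where the diagram is irregular) and in both the short-root and long-root marking regimes, the combinatorics of the NW $s$-chain forces $\Delta(b)$, $k_{u,v}$, and $\eta_{u,v}$ to line up with the above $q$-integer identity. The uniformity of the algorithm is exactly what makes this part of the proof tractable: once the three local changes to the skew diagram are described in terms of chain modifications, the required factorizations follow from a single family of identities, and Cases A and B fall out of straightforward bookkeeping.
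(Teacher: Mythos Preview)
Your overall strategy—verify Deodhar's recursion by induction on $\ell(v)$—is exactly the paper's, and your Case~B is essentially Proposition~\ref{triv}. The problems lie in Cases~A and~C.

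In Case~A you assert that removing the $s$-corner from both $M$ and $\Lambda$ ``merely shifts'' the $s$-chain and preserves all $\Delta$-values. This is false. When the $s$-labeled outside corner $b'$ leaves $M$, the boxes immediately right of and below $b'$ become candidates for inside/outside corners of $M'$, so the ascent/descent status of the \emph{neighboring} reflections $s'$ changes. Consequently the marks on the two diagonals adjacent to the $s$-diagonal can flip from blank to $\pm$ or vice versa. The paper's tables in Section~8 show explicitly that the marked pictures for $\Lambda\setminus M$ and $\Lambda'\setminus M'$ look quite different (e.g.\ a single diagonal of $-$'s becomes an alternating $+/-$ pattern on three diagonals), and equality of $\tilde r$ holds only after a nontrivial cancellation such as $\frac{1}{(q-1)^k\qb{k!}}=\frac{(q-1)^k\qb{k!}}{(q-1)^{2k}\qb{k!}^2}$. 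Your ``shift'' description does not see this.

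Case~C has two concrete errors. First, $\Delta(b)$ is the number of \emph{all} boxes weakly northwest of $b$ in $\Lambda\setminus M$, not the number of $s$-labeled boxes; your formula for $\Delta(b)$ is therefore wrong. Second, the identity you quote, $\qb{m+1}=(q-1)\qb{m}+q^{m}$, is not an identity of $q$-integers (the right side equals $2q^{m}-1$). The identities that actually arise are $\qb{k}=\qb{k-1}+q^{k-1}$ and $\qb{k}=1+q\qb{k-1}$ (Lemma~\ref{q}), and which one is needed depends on the local configuration of the three adjacent diagonals; the $(q-1)$ from the recursion is absorbed by the change in $k_{u,v}$, not by the bracket identity. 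As in Case~A, the marks on the two neighboring diagonals also change when you pass from $(u,v)$ to $(u,sv)$ and to $(su,sv)$, so a single chain argument does not suffice.

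The idea you are missing is the paper's localization: the three skew diagrams differ only on the $s$-diagonal and its two neighbors, so one can restrict $\tilde r$ to this strip $I$ and enumerate the finitely many possible local pictures. Once you do that enumeration (and handle $E_6/D_5$, $E_7/E_6$ separately), each case reduces to one of the two identities in Lemma~\ref{q}; but the reduction is genuinely case-by-case, not the uniform one-line reduction you sketch.
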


For ease of discussion, let $\tilde r_{u,v}^J(q)$ be the function defined as $R_{u,v}^J(q)$, but with $\eta_{u,v}(s)=0$ for all $s$.  We provide several examples to aid in the discussion:

\subsection{Examples}
\begin{ex}
Consider $u=\left((543)(65)\right)^{-1}$, $v=\left((54321)(6543)(7654)(8765)(9)\right)^{-1}$ in $A_{10}/A_4\times A_5$.  Again, we use the convention of using $i$ for $s_i$.  Then

\begin{eqnarray*}
&u=\begin{array}{|c|c|c|}\hline 5&4&3\\\hline 6&5&\multicolumn{1}{c}{}\\\cline{1-2}\end{array}\qquad 
v=\begin{array}{|c|c|c|c|c|}
\hline 5&4&3&2&1\\\hline 6&5&4&3\\\cline{1-4} 7&6&5&4&\multicolumn{1}{c}{}\\\cline{1-4} 8&7&6&5&\multicolumn{1}{c}{}\\\cline{1-4} 9&\multicolumn{4}{c}{}\\\cline{1-1}\end{array}&\\\\
&\Lambda\bs M \quad=\quad\begin{array}{|c|c|c|c|c|}
\hline X&X&X&\hspace*{.13in}&\hspace*{.13in}\\\hline X&X&&\ls\\\cline{1-4} &&&&\multicolumn{1}{c}{}\\\cline{1-4} &&&&\multicolumn{1}{c}{}\\\cline{1-4} &\multicolumn{4}{c}{}\\\cline{1-1}\end{array}\quad=\quad
\begin{array}{|c|c|c|c|c|}
\hline X&X&X&+&\hspace*{.13in}\\\hline X&X&+&-\\\cline{1-4} +&&-&+&\multicolumn{1}{c}{}\\\cline{1-4} &+&&-&\multicolumn{1}{c}{}\\\cline{1-4} &\multicolumn{4}{c}{}\\\cline{1-1}\end{array}&\\
\end{eqnarray*}
and $\tilde R_{u,v}(q)=(q-1)^{5-3}\dfrac{(1)(1+q)(1)(1+q)(1)}{(1)(1+q)(1)}q^{10}=(q-1)^2q^{10}(1+q)$.
\end{ex}

\begin{ex}
Let $u=\left((86543)(7)\right)^{-1}$ and $v=\left((8654321)(765432)(86543)(7654)(865)(7)\right)^{-1}$ in $D_8/A_7$.  Then we have
\begin{eqnarray*}
&u=\begin{array}{|c|c|c|c|c|}\hline 8&6&5&4&3\\\hline\multicolumn{1}{c|}{}&7\\\cline{2-2}\end{array}\qquad 
v=\begin{array}{|c|c|c|c|c|c|c|}\hline 8&6&5&4&3&2&1\\\hline\multicolumn{1}{c|}{}&7&6&5&4&3&2\\\cline{2-7}\multicolumn{2}{c|}{}&8&6&5&4&3\\\cline{3-7}\multicolumn{3}{c|}{}&7&6&5&4\\\cline{4-7}\multicolumn{4}{c|}{}&8&6&5\\\cline{5-7}\multicolumn{5}{c|}{}&7\\\cline{6-6}\end{array}&\\\\
&\Lambda\bs M \quad=\quad  \begin{array}{|c|c|c|c|c|c|c|}\hline X&X&X&X&X& \hspace*{.13in}& \hspace*{.13in}\\\hline\multicolumn{1}{c|}{}&X&&&&&\\\cline{2-7}\multicolumn{2}{c|}{}&&&&&\\\cline{3-7}\multicolumn{3}{c|}{}&&&&\\\cline{4-7}\multicolumn{4}{c|}{}&&&\\\cline{5-7}\multicolumn{5}{c|}{}&\\\cline{6-6}\end{array}
\quad=\quad\begin{array}{|c|c|c|c|c|c|c|}\hline X&X&X&X&X& +& \hspace*{.13in}\\\hline\multicolumn{1}{c|}{}&X&+&&&-&+\\\cline{2-7}\multicolumn{2}{c|}{}&&+&&\hspace*{.13in}&-\\\cline{3-7}\multicolumn{3}{c|}{}&-&+&&\\\cline{4-7}\multicolumn{4}{c|}{}&&+&\\\cline{5-7}\multicolumn{5}{c|}{}&-\\\cline{6-6}\end{array}&\\
\end{eqnarray*}
and $\tilde R_{u,v}(q)=(q-1)^{6-4}\dfrac{\qb{4!}\qb{2!}}{\qb{2}\qb{4}\qb{2!}}q^{16}=(q-1)^2(1+q+q^2)q^{16}$.
\end{ex}

\begin{ex}
Take $u=\left((13456)(245)\right)^{-1}$ and $v=\left((13456)(245)(342)(1345)\right)^{-1}$ in $E_6/D_5$.  Then
\begin{eqnarray*}
&u=\begin{array}{|c|c|c|c|c|}\hline1&3&4&5&6\\\hline\multicolumn{2}{c|}{}&2&4&5\\\cline{3-5}\end{array}\qquad 
v=\begin{array}{|c|c|c|c|c|c|c|}\cline{1-5}1&3&4&5&6\\\cline{1-5}\multicolumn{2}{c|}{}&2&4&5\\\cline{3-6}\multicolumn{3}{c|}{}&3&4&2\\\cline{4-7}\multicolumn{3}{c|}{}&1&3&4&5\\\cline{4-7}\end{array}&\\\\
&\Lambda\bs M \quad=\quad\begin{array}{|c|c|c|c|c|c|c|}\cline{1-5}X&X&X&X&X\\\cline{1-5}\multicolumn{2}{c|}{}&X&X&X\\\cline{3-6}\multicolumn{3}{c|}{}&&&\hspace*{.13in}\\\cline{4-7}\multicolumn{3}{c|}{}&&&&\hspace*{.13in}\\\cline{4-7}\end{array}\quad=\quad
\begin{array}{|c|c|c|c|c|c|c|}\cline{1-5}X&X&X&X&X\\\cline{1-5}\multicolumn{2}{c|}{}&X&X&X\\\cline{3-6}\multicolumn{3}{c|}{}&+&&\hspace*{.13in}\\\cline{4-7}\multicolumn{3}{c|}{}&&+&&-\\\cline{4-7}\end{array}&\\
\end{eqnarray*}
and $\tilde R_{u,v}(q)=(q-1)^{2-1}\dfrac{\qb{1}\qb{2}}{\qb{2}}q^6=(q-1)q^6$.
\end{ex}

\begin{ex}
Let $u=\left((13456)(2)\right)^{-1}$ and $v=\left((13456)(245)(342)(13456)\right)^{-1}$ in $E_6/D_5$.  Then
\begin{eqnarray*}
&u=\begin{array}{|c|c|c|c|c|}\hline1&3&4&5&6\\\hline\multicolumn{2}{c|}{}&2\\\cline{3-3}\end{array}\qquad 
v=\begin{array}{|c|c|c|c|c|c|c|c|}\cline{1-5}1&3&4&5&6\\\cline{1-5}\multicolumn{2}{c|}{}&2&4&5\\\cline{3-6}\multicolumn{3}{c|}{}&3&4&2\\\cline{4-8}\multicolumn{3}{c|}{}&1&3&4&5&6\\\cline{4-8}\end{array}&\\\\
&\Lambda\bs M \quad=\quad\begin{array}{|c|c|c|c|c|c|c|c|}\cline{1-5}X&X&X&X&X\\\cline{1-5}\multicolumn{2}{c|}{}&X&&\\\cline{3-6}\multicolumn{3}{c|}{}&&\hspace*{.13in}&\hspace*{.13in}\\\cline{4-8}\multicolumn{3}{c|}{}&&\hspace*{.13in}&\hspace*{.13in}&\hspace*{.13in}&\hspace*{.13in}\\\cline{4-8}\end{array}\quad=\quad
\begin{array}{|c|c|c|c|c|c|c|c|}\cline{1-5}X&X&X&X&X\\\cline{1-5}\multicolumn{2}{c|}{}&X&+&\\\cline{3-6}\multicolumn{3}{c|}{}&&+&-\\\cline{4-8}\multicolumn{3}{c|}{}&&\hspace*{.13in}&+&\hspace*{.13in}&-\\\cline{4-8}\end{array}
&\\
\end{eqnarray*}
with $\tilde R_{u,v}(q)=(q-1)^{3-2}\dfrac{\qb{3!}}{\qb{2}\qb{3}}q^9=(q-1)q^9$.
\end{ex}

\section{Strategy of the proof, and a trivial case}
\subsection{Strategy}
We will show that $\tilde R_{u,v}^J(q)$ obeys the recursions required by the relative $R$-polynomials.  One case of the recursion is trivial, and will be treated case independently in Proposition \ref{triv}.  The strategy for the proof of the other recursive rules is as follows: we identify the possible diagrammatic presentations of each rule, and reduce to a trivial identity about quantum integers.  The discussion in Section \ref{restriction} will detail how we can restrict the skew diagram associated to the pair $u<v$ to a smaller diagram and prove the recursion there.  

\subsection{A simple case of the recurrence}
We can show that the $\tilde R$-polynomials satisfy the second recursive relation in Deodhar's definition of relative $R$-polynomials without appealing to any particular Hermitian symmetric pair.
\begin{prop}\label{triv}
Let $u,v\in W^J$ with $u<v$, and suppose that $s\in D_L(v)\bs D_L(u)$, and that $su\notin W^J$.  Then $\tilde R_{u,v}^J(q)=q\tilde R_{u,sv}^J(q)$.
\end{prop}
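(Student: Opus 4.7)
The plan is to check that passing from $v$ to $sv$ modifies the skew diagram $\Lambda\bs M$ only by deletion of a single unmarked box, so that the product and $(q-1)$-power appearing in \eqref{MT} are preserved, while the $q^\eta$ factor absorbs the length drop $\ell(v)-\ell(sv)=1$.

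First I would locate the box that distinguishes $\Lambda$ from the diagram $\Lambda'$ of $sv$. Since $s\in D_L(v)$, Proposition~\ref{ordertheorem} produces an $s$-labeled outside corner $b$ of $\Lambda$ with $\Lambda'=\Lambda\bs\{b\}$. The key check is that $b\notin M$: the outside-corner property of $b$ in $\Lambda$ means no box of $\Lambda\bs\{b\}$ has $b$ weakly northwest of it, a condition that transfers to the subset $M\bs\{b\}$ and would make $b$ an outside corner of $M$ if $b\in M$, forcing $s\in D_L(u)$ by Proposition~\ref{ordertheorem} and contradicting the hypothesis. A subword argument using Theorem~\ref{subcrit} on a reduced decomposition $v=s\cdot(sv)$ then gives $u\leq sv$: any reduced subword realization of $u$ that used the initial $s$ would produce a reduced decomposition $u=su'$, forcing $s\in D_L(u)$. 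Hence $M\subset \Lambda'$ and $\Lambda'\bs M=(\Lambda\bs M)\bs\{b\}$.

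Next I would argue that $b$ is unmarked and that its removal is invisible to all other marks and $\Delta$-values. The hypotheses $s\notin D_L(u)$ and $su\notin W^J$ together exclude both marking rules of Section~\ref{marks} for every $s$-labeled box; in particular $b$ is unmarked, so $\Delta(b)=1$ and its factor in \eqref{MT} equals $1$. For a remaining $b'\in\Lambda'\bs M$, whether $b'$ is marked depends only on $D_L(u)$ and, for long reflections, on the count of same-labeled boxes northwest of $b'$; neither datum is affected by deleting an $s$-labeled box. Moreover, the outside-corner property of $b$ implies that $b$ is not weakly northwest of any other box of $\Lambda\bs M$, so $\Delta(b')$ agrees in both skew diagrams. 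Consequently $k_{u,v}=k_{u,sv}$ and the $[\Delta]$-product is unchanged, yielding $\tilde r_{u,v}^J(q)=\tilde r_{u,sv}^J(q)$.

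Finally, since $\tilde R_{u,v}^J$ and $\tilde R_{u,sv}^J$ have degrees $\ell(v)-\ell(u)$ and $\ell(v)-\ell(u)-1$ respectively and their non-$q^\eta$ parts coincide, the defining condition on $\eta$ forces $\eta_{u,v}=\eta_{u,sv}+1$, and therefore $\tilde R_{u,v}^J(q)=q\,\tilde R_{u,sv}^J(q)$. The main obstacle is the verification in the first step that the corner $b$ lies outside $M$ while $M$ persists as a subdiagram of $\Lambda'$; once these two facts are secured, the rest of the argument is purely local bookkeeping about how the marking algorithm and quantum-integer factors respond to removing a single outside corner.
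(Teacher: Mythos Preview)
Your argument is correct and follows the same approach as the paper's proof: identify the removed $s$-labeled outside corner, observe that the hypotheses $s\notin D_L(u)$ and $su\notin W^J$ force all $s$-labeled boxes (and in particular this corner) to be unmarked, conclude $\tilde r_{u,v}^J=\tilde r_{u,sv}^J$, and recover the factor of $q$ from the degree normalization. You supply more detail than the paper does---verifying explicitly that $b\notin M$, that $u\leq sv$, and that no other $\Delta$-value or mark is disturbed---but the underlying idea is identical.
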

\begin{proof}
If $\Lambda$ and $M$ are the diagrams for $v$ and $u$ respectively, the situation in the proposition is that there is an $s$-labeled outside corner of $v$ that is neither an inside or outside corner of $u$.  Thus the $s$-labeled boxes in $\Lambda\bs M$ are not marked with $+$ or $-$.  Similarly, if $\Lambda'$ is the diagram for $sv$, the only difference between $\Lambda$ and $\Lambda'$ is a single unmarked box, implying that $\tilde r_{u,v}^J(q)=\tilde r_{u,sv}^J(q)$.  Then the only difference between $\tilde R_{u,v}^J(q)$ and $q\tilde R_{u,sv}^J(q)$ is a single factor of $q$, yielding the result.
\end{proof}

\section{The case $s\in D_L(v)\cap D_L(u)$}\label{restriction}
\subsection{A remark on markings}
Suppose that $s\in D_L(v)$, and that $u< v$.  Let $\Lambda$ be the diagram of $v$, with $\Lambda'$ the diagram for $sv$, and $M$ the diagram for $u$, with $M'$ the diagram for $su$ if $su\in W^J$.  Suppose furthermore that $W^J$ is not $E_6/D_5$ or $E_6$.  

If $s\in D_L(u)$, then there is an $s$-labeled outer corner in $M$.  Then the skew diagram $\Lambda\bs M$ differs from $\Lambda'\bs M'$ in the diagonal containing that outside corner, and possibly in the diagonals immediately above and below this diagonal.  The upper diagonal differs if $s'u>u$ and $s'u\in W^J$, where $s'$ is the label of the box immediately right of the $s$-labeled outside corner in the diagram for $u$,  and a similar condition holds for the lower diagonal.   

In almost the same way, if $s\in D_L(v)\bs D_L(u)$, and $su\in W^J$, the diagrams $\Lambda\bs M$, $\Lambda'\bs M$, and $\Lambda'\bs M'$ differ at most on the diagonal of $\Lambda\bs M$ containing the $s$-labeled outside corner, and on the diagonals immediately above and below it.

Thus the difference in the polynomials $\tilde R_{u,v}^J(q)$, $\tilde R_{u,sv}^J(q)$, and $\tilde R_{su,sv}^J(q)$ is determined by markings that differ on at most three diagonals.  We refer to this set of diagonals as $I$, and define $\tilde R_{u,v}^J(q)|_I$ to be the polynomial determined by the markings on the diagonals in $I$.  

\subsection{Another case of the recurrence}
With this remark in hand, we can take on the next case of the recursion for $R_{u,v}^J(q)$.

\begin{prop}
Suppose that $s\in D_L(u)\cap D_L(v)$.  Then $\tilde R_{u,v}^J(q)=\tilde R_{su,sv}^J(q)$.
\end{prop}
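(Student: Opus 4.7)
The plan is to invoke the remark from Section \ref{restriction}, which (outside the exceptional pairs $E_6/D_5$ and $E_7/E_6$) localizes the discrepancy between $\tilde R^J_{u,v}(q)$ and $\tilde R^J_{su,sv}(q)$ to a set $I$ of at most three diagonals gathered around the $s$-outside corners of $M$ and $\Lambda$. By Proposition \ref{ordertheorem} the hypothesis $s \in D_L(u)\cap D_L(v)$ produces $s$-labeled outside corners $A$ of $M$ and $B$ of $\Lambda$, and $A$ lies weakly above-left of $B$ on the $s$-diagonal since $M \subset \Lambda$. Once the agreement $\tilde R^J_{u,v}(q)|_I = \tilde R^J_{su,sv}(q)|_I$ is verified, the full identity follows because all other diagonals contribute identically.

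The verification splits into two cases. If $A = B$, the $s$-diagonal contributes no box to $\Lambda\setminus M$, so $s$ produces no markings in either polynomial; only the two adjacent diagonals need to be analyzed, and the verification reduces to comparing the left-descent status of the boxes neighboring $A$ in $u$ versus $su$. If $A \neq B$, list the $s$-boxes of $\Lambda$ as $C_0,\ldots,C_m$ with $M$ containing $C_0,\ldots,C_k = A$; the $s$-segment $C_{k+1},\ldots,C_m$ of $\Lambda\setminus M$ (marked $-$ in the short case, or on even positions in the long case) is replaced in $\Lambda'\setminus M'$ by the shifted segment $C_k,\ldots,C_{m-1}$ (marked $+$, respectively on odd positions). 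In either case I would enumerate the finitely many local configurations of $M$ and $\Lambda$ near $A$ and $B$ by recording, for each box of the two adjacent diagonals, whether it belongs to $M$, to $\Lambda\setminus M$, or to the complement of $\Lambda$. Rules (1) and (2) of Section \ref{marks} then determine the markings and the exponents $\Delta(b)$ explicitly, and the resulting products of quantum integers $\qb{\Delta(b)}$ collapse through elementary manipulations --- a telescoping ratio $\qb{j+1}/\qb{j}$, or the cancellation of a matched $(+,-)$ pair with identical $\Delta$.

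The principal obstacle is the long-reflection setting of types $B$ and $C$. There the marking rules involve the parity of $\delta(b)$, and the shift of the $s$-segment by one box along the diagonal flips the parity of every $C_i$; I would verify by a case split on the parity of the segment length $m-k$ that this parity inversion is exactly compensated by the change of markings on the adjacent short-reflection diagonals. The exceptional pairs $E_6/D_5$ and $E_7/E_6$, to which the remark from Section \ref{restriction} is not directly asserted, are treated by direct inspection: since the diagrams are fixed and small, the finitely many $(u,v)$ pairs with a common $s$-descent can be enumerated and the identity verified against Deodhar's recursion.
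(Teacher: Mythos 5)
Your proposal follows essentially the same route as the paper: use the localization remark to reduce to the (at most three) diagonals around the $s$-labeled outside corners, enumerate the finitely many local marked configurations, verify that the resulting products of quantum integers agree via elementary identities, and dispose of $E_6/D_5$ and $E_7/E_6$ by direct inspection. The paper executes this by explicitly tabulating the paired restricted diagrams and checking one representative computation, but the underlying strategy is identical to yours.
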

\begin{proof}
In this case, $su\leq u$, implying $su\in W^J$, so in particular the diagram for $su$ is contained within the diagram of $u$, and is in fact obtained by deleting the $s$-labeled outside corner.  Let $\Lambda_v$, $M_u$, and $M_{su}$ are the diagrams associated to $v,u$, and $su$, respectively.  Analyzing the diagrams of Section \ref{HSDiag}, we see the following marked diagrams.  Note that we draw only the three relevant diagonals, and do not cover the cases $E_6/D_5$ or $E_7/E_6$, or include diagrams symmetric to given diagrams.  In each pair of diagrams, the left one is taken from $\Lambda\bs M$, and the right one is from $\Lambda'\bs M'$, and it is a matter of definitions to see that $\tilde r_{u,v}^J(q)|_I=\tilde r_{su,sv}^J(q)|_I$.

To be explicit, consider the restricted diagram marked as below:
\[\framebox{
\young(XX,XX\ls,:\ls -\ls,::\ls\dlps\ls,:::\ls-\ls,::::\ls-),\young(XX,X+-,:-+-,::-\dlps-,:::-+-,::::-)}\]

Assume that the length of the $-$ marked diagonal in the first diagram is $k$; then $\tilde r_{u,v}^J(q)=\dfrac{1}{(q-1)^k\qb{k!}}$, while $\tilde r_{su,sv}^J(q)=\dfrac{(q-1)^{k}\qb{k!}}{(q-1)^{2k}\qb{k!}\qb{k!}}$, showing the two are equal.  As the skew diagram for the pair $su<sv$ contains exactly as many boxes as the diagram for $u<v$, $q^{\eta(u,v)}=q^{\eta(su,sv)}$, showing $\tilde R_{u,v}^J(q)=\tilde R_{su,sv}^J(q)$ in this case.

All other cases are similar; the relevant diagrams appear below:

\[\begin{array}{|c|c|}\hline
\young(XXX,XX+,:\ls-+,::\ls\dlps\ls,:::\ls-+,::::\ls-),\young(XXX,X+\ls,:-+\ls,::\ls\dlps\ls,:::-+\ls,::::-)&
\young(XXX,XX+,X+-+,::+-+,:::\ls\dlps\ls,::::+-+,:::::+-),\raisebox{1.25in}{\ls}\young(XXX,X+\ls,X\ls+\ls,::\ls+\ls,:::\ls\dlps\ls,::::\ls+\ls,:::::\ls)\\\hline
\end{array}\]

\[\begin{array}{|c|c|}\hline
\young(XX+\lps\ls,::::\vlps,::::\ls,::::\ls),\young(X+\ls\lps\ls,::::\vlps,::::\ls)&
\raisebox{.9in}{\ls}\young(XX+,:+-+,::\ls-+,:::\ls\dlps\ls,::::+-),\young(X+\ls,:\ls+\ls,::-+\ls,:::\ls\dlps\ls,::::\ls)\\\hline
\young(XX+,:+-+,::\ls-+,:::\ls\dlps\ls,::::\ls-),\young(X+\ls,:\ls+\ls,::-+\ls,:::\ls\dlps\ls,::::+)&
\raisebox{1.1in}{\ls}\young(XXX,:XX\ls,::+-\ls,:::\ls-\ls,::::\ls\dlps\ls,:::::+-),\young(XXX,:X+-,::\ls+-,:::-+-,::::\ls\dlps\ls,:::::\ls)\\\hline
\young(XXX,:XX\ls,::+-\ls,:::\ls-\ls,::::\ls\dlps\ls,:::::\ls-),\young(XXX,:X+-,::\ls+-,:::-+-,::::\ls\dlps\ls,:::::-)&
\raisebox{1.1in}{\ls}\young(XX+\lps\ls\ls,::::\ls\ls,:::::\vlps,:::::+,:::::-),\young(X+\ls\lps\ls\ls,::::\ls\ls,:::::\vlps,:::::\ls)\\\hline
\end{array}\]

By the discussion above, this shows that if $u<v$ in $W^J$, and $s\in D_L(v)\cap D_L(u)$, then $\tilde R_{u,v}^J(q)=\tilde R_{su,sv}^J(q)$ for $W^J\notin\{E_6/D_5, E_7/E_6\}$.   The cases $E_6/D_5$ and $E_7/E_6$ can be verified using the computer. 
\end{proof}

\section{The case $u<su$, $su\in W^J$}
\subsection{Strategy, and a reduction}
As noted above, this proof will proceed by noting the possible restricted diagrams that may appear with an $s$-labeled outside corner of $v$, and an $s$ labeled inside corner of $u$.  In each case, we provide the diagrams associated to $u<v$, $u<sv$, and $su<sv$, respectively, and reference the part of Lemma \ref{q} (below) that is used in showing that $\tilde R_{u,v}^J(q)|_I=(q-1)\tilde R_{u,sv}^J(q)|_I+q\tilde R_{su,sv}^J(q)|_I$.  

Let $\Lambda_x$ denote the diagram of $x$ in $W^J$, and $\partial_{u,v}(I)$ the number of boxes in the restriction to $I$ of $\Lambda_v\bs\Lambda_u$.  Then $\partial_{u,v}(I)=\partial_{u,sv}(I)+1=\partial_{su,sv}(I)+2$.  Note that $\tilde R_{u,v}^J(q)|_I=\tilde r_{u,v}^J(q)q^g$, where $g$ is $\partial_{u,v}(I)-\deg(\tilde r_{u,v}^J(q)|_I)$.   In particular, if we wish to show that $\tilde R_{u,v}^J(q)|_I=(q-1)\tilde R_{u,sv}^J(q)+q\tilde R_{su,sv}^J(q)$, we can instead show $\tilde r_{u,v}^J(q)|_I=q^a(q-1)\tilde r_{u,sv}^J(q)|_I+q^b\cdot q\tilde r_{su,sv}^J(q)|_I$, where $a+\deg(\tilde r_{u,sv}^J(q)|_I)+1=\deg(\tilde r_{u,v}^J(q)|_I)$, and similarly for $b$.  Then multiplication by $q^g$, where $g$ is as above, generates the desired identity.  Calculating $g$ introduces extra notation, and can be avoided in the proof that follows.  Instead, we will multiply the terms coming from the diagrams for $u<sv$ and $su<sv$ by the power of $q$ to bring those terms to degree $\partial_{u,v}(I)-1$, resp. $\partial_{u,v}(I)-2$.

\subsection{A simple lemma}
We require a lemma to make the proof.
\begin{lem}\label{q}
For and $k\geq 1$,
\begin{enumerate}
\item $\qb{k}=\qb{k-1}+q^{k-1}$
\item $\qb{k}=1+q\qb{k-1}$ 
\end{enumerate}
\end{lem}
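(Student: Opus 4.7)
The plan is to unwind the definition of $\qb{k}$ as the truncated geometric series $1+q+q^2+\cdots+q^{k-1}$ and read off the two identities by splitting the sum in two different ways. There is no real obstacle; both parts are one-line manipulations, and the lemma is invoked later only to rewrite restricted products of quantum integers.

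For part (1), I would start from
\[\qb{k}=1+q+q^2+\cdots+q^{k-2}+q^{k-1},\]
pull off the top-degree monomial $q^{k-1}$, and identify the remaining initial segment $1+q+\cdots+q^{k-2}$ as $\qb{k-1}$. When $k=1$ this degenerates correctly to $\qb{1}=1=\qb{0}+q^{0}$ with the convention $\qb{0}=0$.

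For part (2), from the same expansion I would factor $q$ out of every term of positive degree:
\[\qb{k}=1+q\bigl(1+q+\cdots+q^{k-2}\bigr)=1+q\qb{k-1},\]
again giving the right answer at $k=1$.

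Since each identity is immediate from the closed form $\qb{m}=\frac{q^m-1}{q-1}$, the only thing worth flagging is the boundary case $k=1$, which I would handle by declaring $\qb{0}=0$ consistently with the formula $\qb{m}=1+q+\cdots+q^{m-1}$ (empty sum).
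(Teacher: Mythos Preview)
Your proposal is correct and matches the paper's own proof, which simply states that the result follows from the definition of quantum integers. You have merely spelled out the one-line manipulations that the paper leaves implicit, so there is nothing to add.
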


\begin{proof}
The result follows from the definition of quantum integers.
\end{proof}

\subsection{The final case of the recurrence}
To finish the proof of the main theorem, consider first the restricted diagram 
\[\young(XXX,XX+-,::-+-,:::\dlps\dlps\dlps,::::-+-,:::::-+)\quad\young(XXX,XX+-,::-+-,:::\dlps\dlps\dlps,::::-+-,:::::-)\young(XXX,XXX\ls,::\ls-\ls,:::\dlps\dlps\dlps,::::\ls-\ls,:::::\ls)\]
with outside corners north of and west of the inside $s$-labeled corner, occurs in the Type $A_n$ quotients, as well as $C_n/A_{n-1}, D_n/A_{n-1}$, $E_6/D_5$, and $E_7/E_6$, with $k$ $+$ marked boxes.  This diagram yields
\[\tilde r_{u,v}^J(q)|_I=(q-1)^{2-k}\frac{\qb{k}}{\qb{(k-1)!}}\]
which is of degree $1-\frac{(k-2)(k-1)}{2}$.  Then $\tilde r_{u,sv}^J(q)|_I=(q-1)^{1-k}\dfrac{1}{\qb{(k-1)!}}$, which has degree $1-k-\frac{(k-2)(k-1)}{2}$, differs from expectations by $1-k$, and $\tilde r_{su,sv}^J(q)=(q-1)^{2-k}\frac{1}{(k-2)!}$ with degree $2-k-\frac{(k-3)(k-2)}{2}$, differs from the expected degree by $1$.  Thus the relevant identity to show is
\begin{eqnarray*}(q-1)^{2-k}\frac{\qb{k}}{\qb{(k-1)!}}&=&(q-1)q^{k-1}\cdot \dfrac{(q-1)^{1-k}}{\qb{(k-1)!}}+q q^{-1}\cdot \frac{(q-1)^{2-k}}{(k-2)!}\\
&=&\frac{(q-1)^{2-k}}{\qb{(k-1)!}}\left( q^{k-1}+\qb{k-1}\right) \end{eqnarray*}
which is Lemma \ref{q} (1).

From here, we describe on, we cease to explicitly calculate the expected degrees.  In all cases, $k$ is the number of $+$ signs in the first diagram of a triple.

For our next diagram, take
\[\young(XXX,:X+\ls,::-+\ls,:::\dlps\dlps\dlps,::::-+)\qquad\young(XXX,:X+\ls,::-+\ls,:::\dlps\dlps\dlps,::::-)\qquad \young(XXX,:XX+,::\ls-+,:::\dlps\dlps\dlps,::::\ls)\]
corresponds to the identity
\[(q-1)\qb{k}=(q-1)q^{k-1}+q\cdot q^{-1}(q-1)\qb{k-1}\]
which reduces to Lemma \ref{q}, (1).

This appears in the Type $A_n$ quotients, as well as $C_n/A_{n-1}, D_n/A_{n-1}$, $E_6/D_5$, and $E_7/E_6$.

Next consider
\[\young(XXX,X+\ls,:\ls+\ls,::\dlps\dlps\dlps,:::\ls+)\qquad \young(XXX,X+\ls,:\ls+\ls,::\dlps\dlps\dlps,:::\ls)\qquad \young(XXX,XX+,:+-+,::\dlps\dlps\dlps,:::+)\]
corresponds to the identity
\[(q-1)^k\qb{k!}=(q-1)\qb{(k-1)!}q^{k-1}+q(q-1)^k\qb{k-1}\qb{(k-1)!}q^{-1}\]
which reduces to Lemma \ref{q}, (1).

This appears in the Type $A_n$ quotients, as well as $C_n/A_{n-1}, D_n/A_{n-1}$, $E_6/D_5$, and $E_7/E_6$.

We now move to restricted diagrams that appear in $D_n/A_{n-1}$ and $C_n/A_{n-1}$.  Consider the restricted diagram of $\Lambda\bs M$
\[\young(XXX,:ac,::bc,:::ac,::::\dlps\dlps,:::::a)\]
where $a$ is $s_n$, and $b$ is $s_{n-1}$, or vice versa.  Then we obtain the following triple of marked diagrams
\[\young(XX,:+\ls,::\ls\ls,:::+\ls,::::\dlps\dlps,:::::\ls\ls,::::::+)\qquad\young(XX,:+\ls,::\ls\ls,:::+\ls,::::\dlps\dlps,:::::\ls\ls)\qquad\young(XX,:X+,::\ls+,:::-+,::::\dlps\dlps,:::::\ls+)\]
(note the first box in the bottom row of the last diagram cannot be marked, by reason of parity).  Suppose there are $k$ $+$ marked boxes in the first diagram.  Then this restricted diagram corresponds to the identity
\[(q-1)^k\qb{1}\qb{3}\cdots\qb{2k-1}=(q-1)^{k}q^{2k-2}\qb{1}\qb{3}\cdots\qb{2k-3}+(q-1)^k\qb{1}\qb{3}\cdots\qb{2k-3}\qb{2k-2}\]
which reduces to Lemma \ref{q}, (1).

The restricted diagram of $\Lambda\bs M$
\[\young(XXX,:ab,::ab,:::ab,::::\dlps\dlps,:::::a)\]
with $a=s_n$, $b=s_{n-1}$ with an odd number of copies of $s_n$ appears as a restricted interval in $C_n/C_{n-1}$, and by our labeling algorithm, bears the same marks as the above case, and so satisfies the same identity.  

The same restricted diagram, but with an even number of $s_n$ labeled boxes, appears in $C_n/C_{n-1}$ but not $D_n/D_{n-1}$ (as we assume in that the label in the lowest box $b$ is a descent of $v$, which in $D_n$ requires $b$ to be an odd box).  Here the identity is
\[(q-1)^k\qb{1}\qb{3}\cdots\qb{2k-1}=q^{-1}\left((q-1)^{k+1}\qb{1}\qb{3}\cdots\qb{2k-1}\right)+q^{-2}\left(q(q-1)^k\qb{1}\qb{3}\cdots\qb{2k-1}\right)\]
which is true by inspection.

Consider the diagram $\Lambda\bs M$ corresponding to $u<v$
\[\begin{array}{|c|c|c|c|c|c|c|}\hline
X&\cdots& X&k&\cdots& n-2&n\\\hline
\multicolumn{5}{c|}{}&n-1&n-2\\\cline{6-7}
\multicolumn{6}{c|}{}&\vdots\\\cline{7-7}
\multicolumn{6}{c|}{}&k\\\cline{7-7}\end{array}\]
with marked triple
\[\young(X\lps X+\ls\lps\ls\ls,::::::\ls\ls,:::::::\vlps,:::::::\ls,:::::::+)\qquad\young(X\lps X+\ls\lps\ls\ls,::::::\ls\ls,:::::::\vlps,:::::::\ls)\qquad \young(X\lps XX+\lps\ls\ls,::::::\ls\ls,:::::::\vlps,:::::::+)\]
and note if two boxes are $+$ marked, their weights are $1$ and $k$, where the $+$ marked box is labeled $s_k$.  From this, we can read $\tilde R_{u,v}^J(q)=(q-1)^2\qb{k}q^{k-1}$, while $\tilde R_{u,v}^J(q)$ for the second diagram is $\tilde R_{u,sv}^J(q)=(q-1)q^{2k-1}$, and the third is $\tilde R_{su,sv}^J(q)=(q-1)^2\qb{k-1}q^{k-2}$.  Substituting into the recursion, we can reduce to the identity in Lemma \ref{q}, (2).  

This diagram appears in $D_n/D_{n-1}$, and can be renumbered
\[\young(543,:24,::5)\]
to be found in $E_6/D_5$ and $E_7/E_6$.

In $B_n/B_{n-1}$, the only relevant diagrams have the following shape
\[\young(X\lps Xk\lps n,:::::\vlps,:::::k),\]
with marked triple
\[\young(X\lps X+\ls\lps\ls,::::::\vlps,::::::\ls,::::::\ls)\qquad \young(X\lps X+\ls\lps\ls,::::::\vlps,::::::\ls)\qquad \young(X\lps XX+\lps\ls,::::::\vlps,::::::\ls)\]
We show the full diagram, but this is only to aid comprehension.  The usual three diagonals are all that is important to the calculation.

The relevant identity is
\[(q-1)q^k=(q-1)\cdot (q-1)q^{k-1}+q\cdot (q-1)q^{k-2}\]
which is trivially verified.

As an aside, it is straightforward to use the above identity, and induction, to show the following.
\begin{prop}\label{BnClosed}
If $[u,v]$ is an interval in $B_n/B_{n-1}$, then $R_{u,v}^J(q)=(q-1)q^{\ell(v)-\ell(u)-1}$.
\end{prop}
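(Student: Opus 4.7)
The plan is to prove this by induction on $\ell(v)-\ell(u)$, with a secondary induction on $\ell(v)$, applying Deodhar's recursion directly. The author's hint that the identity $(q-1)q^k=(q-1)^2q^{k-1}+q(q-1)q^{k-2}$ extracted from the $B_n/B_{n-1}$ sub-case of the main theorem is the essential algebraic input makes this approach natural; one bypasses the full diagrammatic machinery.

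For the base case $\ell(v)-\ell(u)=1$, the element $v$ covers $u$, so there is a unique $s$ with $v=su$; this $s$ lies in $D_L(v)\setminus D_L(u)$ with $su=v\in W^J$, and the third branch of Deodhar's recursion gives $R^J_{u,v}(q)=(q-1)R^J_{u,u}(q)+qR^J_{v,u}(q)=q-1$. For the inductive step, pick any $s\in D_L(v)$ and branch on Deodhar's three cases. If $s\in D_L(u)$, then $R^J_{u,v}=R^J_{su,sv}$, and the secondary induction on $\ell(v)$ (which decreases while the length difference is preserved) gives the result. If $s\notin D_L(u)$ and $su\notin W^J$, then $R^J_{u,v}=qR^J_{u,sv}$, and the inductive hypothesis on $\ell(v)-\ell(u)$ immediately yields the formula. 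If $s\notin D_L(u)$ and $su\in W^J$, then $u<su\leq sv<v$; equality of lengths would force $su=sv$, hence $u=v$, so in fact $\ell(v)-\ell(u)\geq 3$. Substituting the inductive formulas $R^J_{u,sv}=(q-1)q^{\ell(v)-\ell(u)-2}$ and $R^J_{su,sv}=(q-1)q^{\ell(v)-\ell(u)-3}$ into the three-term recursion reduces the claim to the algebraic identity above.

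I do not anticipate a serious obstacle. The main organizational subtlety is that the first case does not decrease $\ell(v)-\ell(u)$, which is why a secondary induction on $\ell(v)$ is required, and one must briefly verify that the third case genuinely cannot occur with $\ell(v)-\ell(u)=2$. Both points are short and routine, consistent with the author's description of the argument as ``straightforward'' given the established identity. A useful preliminary observation that makes the whole picture transparent is that the partition description of Section \ref{diagrams} shows $B_n/B_{n-1}$ is a chain under Bruhat order (every subdiagram is either a partial top row or the full top row with a leg), so all elements appearing in the recursions automatically lie in $W^J$ and there is no branching to worry about.
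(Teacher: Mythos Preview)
Your proposal is correct and follows essentially the same approach the paper indicates: the paper only says ``it is straightforward to use the above identity, and induction,'' and your lexicographic induction on $(\ell(v)-\ell(u),\ell(v))$ via Deodhar's recursion, together with the identity $(q-1)q^k=(q-1)^2q^{k-1}+q(q-1)q^{k-2}$, is exactly that argument spelled out. Your observation that $B_n/B_{n-1}$ is a chain is what makes the comparability of $su$ and $sv$ (and the exclusion of the third case when $\ell(v)-\ell(u)=2$) immediate.
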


This type of interval appears in $B_n/B_{n-1}$, $C_n/C_{n-1}$, $E_6/D_5$, and $E_7/E_6$.

The remaining cases are similar, and we give only the diagrams which have yet to appear:

In $C_n/C_{n-1}$ and $D_n/D_{n-1}$ we have
\[\young(XX\ls,X+\ls,:\ls+\ls,::-+\ls,:::\dlps\dlps\dlps,::::\epsilon+)\qquad\young(XX\ls,X+\ls,:\ls+\ls,::-+\ls,:::\dlps\dlps\dlps,::::\epsilon)\qquad\young(XX\ls,XX+,:+-+,::\ls-+,:::\dlps\dlps\dlps,::::\xi)\]
where $\e$ is $-$ if there are an even number of $+$ signs in the first diagram, and is unmarked otherwise, and $\xi$ is $+$ if there are an odd number of $+$ signs in the first diagram, and unmarked otherwise.

Similarly, in $C_n/C_{n-1}$ and $D_n/D_{n-1}$, we have the restricted diagrams
\[\young(XXX,:X+-,::\ls+-,:::-+-,::::\dlps\dlps\dlps,:::::\e+)\qquad\young(XXX,:X+-,::\ls+-,:::-+-,::::\dlps\dlps\dlps,:::::\e)\qquad \young(XXX,:XX+,::+-+,:::\ls-+,::::\dlps\dlps\dlps,:::::\xi)\]
with $\e$ and $\xi$ as above.

Finally, there are finitely many cases arising in $E_6/D_5$ and $E_7/E_6$ that may be obtained by hand or computer program.

\section{Combinatorial Invariance}\label{CombInv}
\subsection{The simplest cases}
As an application of the above algorithm, we show that the intersection cohomology of the cominiscule flag varieties is a combinatorial invariant, i.e. the cohomology is a function only of the poset.  Begin by recalling Proposition \ref{ordertheorem}, which provides a characterization of all covering relations in the Weyl quotient posets of Hermitian symmetric type. In particular, the poset $W^J$ is isomorphic to the containment order on partitions of the given types.  All that remains is to show is that a skew diagram uniquely determines an $R^J$ polynomial.
\begin{prop}
If a skew diagram $v\bs u$ consists of a single row of length $k$, it can be identified with the polynomial $(q-1)q^{k-1}$.  The same statement is true for columns.
\end{prop}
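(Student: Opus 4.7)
The plan is to invoke the Main Theorem directly: this reduces the statement to the computation of $\tilde R^J_{u,v}(q)$ from the markings on $\Lambda\bs M$, where $\Lambda$ and $M$ are the diagrams of $v$ and $u$ respectively. Label the boxes of the single row as $b_1,\ldots,b_k$ from left to right. A direct inspection of each of the diagrams in Definition~\ref{HSDiag} shows that in every Hermitian type the labels appearing in any fixed row (and, symmetrically, any fixed column) are pairwise distinct. Consequently $\delta(b_j)=1$ for every $j$: every box is odd, which makes the parity condition in the $+$ marking rule automatic and simultaneously rules out any $-$ mark carried by a long root.

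The central marking claim is then that $b_1$ is the unique marked box and it carries a $+$. For the leftmost box, $M\cup\{b_1\}$ is obtained from $\Lambda$ by detaching only the trailing row-$r$ boxes $b_2,\ldots,b_k$, so it is still a valid Hermitian subdiagram; therefore $s_{b_1}u\in W^J$ covers $u$ and $b_1$ receives a $+$. For $j\ge 2$ the neighbor $b_{j-1}$ is absent from $M$, so $b_j$ itself is not an appendable position for $M$. It remains to establish (a) no other box labeled $s_{b_j}$ elsewhere in the ambient diagram lies outside $M$ and is addable to $M$, and (b) the label of $b_j$ does not appear as an outside corner of $M$. In type $A$, both (a) and (b) follow because all positions bearing a fixed label lie on a single diagonal of the enclosing rectangle: upper-left occurrences remain interior to $M$ because their right neighbor survives, while the shape constraint $\Lambda_{r+1}\le\Lambda_r-k$ forced by a single-row skew in row $r$ sends every lower-right occurrence outside $\Lambda$. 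Analogous local checks handle the shifted shapes of types $B$, $C$, and $D$; for the long root $s_n$ in $B_n/B_{n-1}$ or $C_n/A_{n-1}$, the odd-parity exclusion already removes the only residual possibility of a $-$ mark. The two exceptional families $E_6/D_5$ and $E_7/E_6$ can be verified by direct inspection of their small diagrams.

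Granted the claim, the Main Theorem contributes $k_{u,v}=1$ and $\prod_{b}\qb{\Delta(b)}^a=1$, so $\tilde R^J_{u,v}(q)=q^{\eta_{u,v}}(q-1)$; matching the total degree to $\ell(v)-\ell(u)=k$ forces $\eta_{u,v}=k-1$ and produces $(q-1)q^{k-1}$. The column statement follows by exactly the same argument with the roles of rows and columns interchanged, using the parallel distinctness of column labels and the analogous shape constraint on single-column skews.

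The main obstacle is condition (a): the uniform exclusion of a rescuing $s_{b_j}$-labeled box elsewhere in the ambient Hermitian diagram. Because the precise geometry of repeated labels differs across the seven families, this has to be traced through each type separately; however, in every case the rigidity forced by the single-row (respectively single-column) shape of $\Lambda\bs M$ is exactly what is needed to preclude any such replacement.
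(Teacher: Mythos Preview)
Your proof is correct and follows the same approach as the paper's, though you supply substantially more detail. The paper's own argument is three sentences: it asserts that the leftmost box of the row is an ascent of $u$, that no other box is an ascent or a descent of $u$, and that the column case is similar. You unpack precisely the same claim---that $b_1$ alone carries a $+$ mark and no box carries a $-$---and then read off the polynomial from the Main Theorem exactly as the paper intends.

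One minor remark on your phrasing: in ruling out lower-right occurrences of the label $s_{b_j}$ in type $A$, you say the single-row constraint ``sends every lower-right occurrence outside $\Lambda$.'' What you actually need is that no such occurrence is an addable position for $M$ (the marking depends on $su\in W^J$, not on membership in $\Lambda$). Your diagonal argument does establish this stronger fact---the inequalities $\mu_{r'}\le\mu_r$ for $r'>r$ together with $r'-\mu_{r'}-1=r-\mu_r-j$ force $r'\le r-j+1<r$, a contradiction---so the conclusion stands; only the stated reason should be sharpened.
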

\begin{proof}
The leftmost box in $v\bs u$ is easily observed to be an ascent of $u$ within $v$ for any diagram of Hermitian type.  No other box is an ascent, or descent, of $u$, thus the polynomial is as stated.  The proof for columns is almost the same.
\end{proof}

Next, we note that if two restricted diagrams from Type $A$ have the same markings, then they have the same $R$-polynomials.

\begin{prop}
If $u<v$ in Type $A$, with shapes $\l$ and $\mu$ for $v$ and $u$ respectively, then $R_{u,v}^J(q)$ depends only on the skew shape $\l\bs\mu$.
\end{prop}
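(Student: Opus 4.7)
The plan is to use the Main Theorem to reduce the claim to showing that the marking data entering $\tilde R_{u,v}^J(q)$ is intrinsic to the skew shape $\lambda\bs\mu$, and in particular does not depend on the embedding of $\lambda\bs\mu$ in the rectangle for $W^J$. By the Main Theorem, $R_{u,v}^J(q) = q^{\eta_{u,v}}(q-1)^{k_{u,v}}\prod_{b\in\Lambda\bs M}\qb{\Delta(b)}^{\pm 1}$, and $\eta_{u,v}$ is forced by the degree constraint $\ell(v)-\ell(u) = |\lambda\bs\mu|$. Since $\Delta(b)$ counts boxes of $\Lambda\bs M$ in the NW-quadrant of $b$---a count purely internal to the skew shape---the real content is to show that the markings on each box of $\Lambda\bs M$ are determined by $\lambda\bs\mu$ alone.

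To that end, I would exploit two structural features of Type $A_n/A_{p-1}\times A_{n-p}$: all simple reflections are short, and the box labels are constant along each NW--SE diagonal. Moreover $\mu$ and $\lambda$ each occupy a prefix of every such diagonal, so the $s_k$-labeled boxes of $\Lambda\bs M$ on any given diagonal form a single contiguous \emph{strip}. The marking rules from Section \ref{marks} then assign a single symbol ($+$, $-$, or blank) uniformly to the entire strip, so it suffices to determine the marking of each strip from $\lambda\bs\mu$.

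The heart of the argument is a local characterization at the NW-most box $b = (i_0, j_0)$ of each strip. Set $T = (i_0-1, j_0)$ and $L = (i_0, j_0-1)$, with the convention that positions outside the rectangle count as not lying in $\lambda\bs\mu$. I claim that the strip is marked $-$ iff both $T$ and $L$ lie in $\lambda\bs\mu$; marked $+$ iff neither $T$ nor $L$ lies in $\lambda\bs\mu$; and unmarked otherwise. The $-$ equivalence comes from Proposition \ref{ordertheorem}: any outer corner of $\mu$ on the $s_k$-diagonal must be the NW-diagonal neighbor $(i_0-1, j_0-1)$, and the outer-corner condition there translates, since $T, L\in\lambda$, into $T, L\in\lambda\bs\mu$. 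The $+$ equivalence translates ``$s_k\notin D_L(u)$ and $s_ku\in W^J$'' via the bijection between $W^J$ and subdiagrams: it says exactly that $(i_0, j_0)$ is addable to $M$ as an outer-corner extension, i.e.\ each of $T$ and $L$ either lies in $\mu$ or lies outside the rectangle.

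With this local characterization in hand, the markings and the values $\Delta(b)$ depend only on $\lambda\bs\mu$, so $\tilde R_{u,v}^J(q)$ does, and the Main Theorem gives the proposition. The main technical obstacle is the careful verification of the two equivalences, especially the $+$ case and the boundary behavior where a strip meets the top row or leftmost column: the ``outside the rectangle $=$ not in $\lambda\bs\mu$'' convention is what makes the characterization uniform with the generic case and thus independent of the embedding parameter $p$.
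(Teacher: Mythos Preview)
Your proposal is correct and follows essentially the same approach as the paper: both argue that the Main Theorem formula for $R_{u,v}^J(q)$ in Type $A$ depends only on data intrinsic to the skew shape $\lambda\bs\mu$. The paper's own proof is a one-line observation that in the Type $A$ verification of the Main Theorem only diagonal lengths of $\Lambda\bs M$ were used; you instead work directly from the formula and give an explicit local criterion (in terms of the neighbors $T$ and $L$ of the NW-most box of each diagonal strip) for the $+/-/\text{blank}$ marking, which makes the skew-shape dependence fully transparent and handles the boundary cases cleanly.
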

\begin{proof}
In the proof of the main theorem for Type $A$ above, only the length of certain diagonals was used to construct $R_{u,v}^J(q)$, and the diagonals are determined uniquely by the skew shape $v\bs u$.  
\end{proof}

\subsection{Combinatorial invariance}
Recall that we say that a diagram without its labels is a partition. In particular, note that a diagram $\Lambda$ is associated with a class of partitions, notably all such partitions that can contain that shape with that filling.  Meanwhile the partition $\l$ associated $\Lambda$ is associated with that class, and any other diagram $M$ that, when stripped of its filling, has shape $\l$.  We will use upper case Greek letters for diagrams, and lower case Greek letters for partitions in this section.  Recall in particular the definitions of Section \ref{Sdiagrams}.

\begin{prop}
If $\Lambda\bs M$ is a standard skew diagram of type $W^J$, it can be identified with a skew diagram in type $A$, and the $R$-polynomial associated to $\Lambda\bs M$ in $W^J$ is the same as the $R$-polynomial of the same shape in type $A$.
\end{prop}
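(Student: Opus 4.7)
The plan is to compare, step by step, the marking algorithm of Section~\ref{marks} and the resulting polynomial of Definition~\ref{MT} applied to $\Lambda \bs M$ inside $W^J$ with the same data applied to the identical underlying shape viewed inside a sufficiently large type $A$ quotient. Since $\Lambda \bs M$ is standard skew, both $\Lambda$ and $M$ have underlying shapes that are ordinary left-justified Young diagrams; choose $A_n/(A_{p-1}\times A_{n-p})$ with $p$ and $n-p+1$ larger than the width and height of $\Lambda$, and let $\Lambda', M'$ be the subdiagrams of this type $A$ quotient with the same underlying shape. Denote the corresponding minimal coset representatives by $u' \leq v'$.

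The key observation is that in each diagram of Definition~\ref{HSDiag}, the labeling in the strictly upper-left region --- before any row is forced to indent into a staircase or to reach a long-reflection label --- assigns the same simple reflection to all boxes on a common NW-to-SE antidiagonal, exactly as in type $A$. Standardness forces $\Lambda$ and $M$ to be left-justified Young shapes that fit inside this bulk region, so the boxes of $\Lambda \bs M$ partition into antidiagonals whose labels are short reflections, and this partition agrees (up to a harmless relabeling of the reflections) with the antidiagonal decomposition of $\Lambda' \bs M'$ in type $A$. Because the marking rules of Section~\ref{marks} depend only on whether a label is short or long, on the membership of certain corner positions in $\Lambda$ and $M$, and on the count $\delta(b)$ of same-labeled boxes to the NW of $b$, the marks produced on $\Lambda \bs M$ coincide with those produced on $\Lambda' \bs M'$, and so do the invariants $\Delta(b)$ and $k_{u,v}$. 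Definition~\ref{MT} then returns the same polynomial on each side; combined with the previous proposition, which already shows that the type $A$ polynomial is a function only of the skew shape, this gives $R_{u,v}^J(q) = R_{u',v'}^{J'}(q)$.

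The main obstacle will be verifying the second paragraph rigorously for each non-type-$A$ pair. In $B_n/B_{n-1}$, $C_n/A_{n-1}$, $D_n/A_{n-1}$, and $D_n/D_{n-1}$ one must show that the staircase or hook structure prevents a standard subshape from reaching a long-reflection label or from creating a second antidiagonal carrying an already-used label, so that the identification with the type $A$ bulk really does preserve the marking data. For $E_6/D_5$ and $E_7/E_6$ the irregular layouts must be examined case by case, most readily by direct inspection or a small computer check.
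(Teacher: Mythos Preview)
Your overall strategy --- identify the boxes of $\Lambda\bs M$ with those of an identically-shaped skew in a large type $A$ quotient and check that the marking algorithm produces the same marks --- is the right one, and is essentially what the paper does. But there is a real gap in your reduction, and it is precisely the case the paper singles out for separate treatment.

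The problematic step is the sentence ``Since $\Lambda\bs M$ is standard skew, both $\Lambda$ and $M$ have underlying shapes that are ordinary left-justified Young diagrams.'' This inference is false. \emph{Standard skew} is a condition on the difference $\Lambda\bs M$, not on $\Lambda$ or $M$ individually. In $C_n/A_{n-1}$ every subdiagram is a strict (staircase) partition, so $\Lambda$ and $M$ are never left-justified with weakly decreasing rows once they have two or more rows; yet $\Lambda\bs M$ can still be standard skew. The paper's own example makes this explicit: with $\Lambda=(4,3)$ and $M=(1)$ in $C_4/A_3$ the skew shape is a $2\times 3$ rectangle, and its bottom-left box is labeled $s_n$, the long simple reflection. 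So your subsequent claim that ``the boxes of $\Lambda\bs M$ \ldots\ are short reflections'' and your stated verification goal (``the staircase \ldots\ prevents a standard subshape from reaching a long-reflection label'') are exactly what does \emph{not} hold in $C_n/A_{n-1}$.

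The paper's fix is short but essential: in $C_n/A_{n-1}$ a standard skew $\Lambda\bs M$ can contain at most one $s_n$-labeled box, sitting at the bottom-left corner, and one checks directly that the parity rule for long reflections gives it the same mark ($+$, $-$, or unmarked) that the short-reflection rule would give in type $A$. For every other Hermitian pair the paper observes something a bit sharper than your ``bulk region'' picture: the set of simple reflections actually appearing as labels in a standard skew diagram spans an induced subgraph of the Coxeter diagram that is itself of type $A$, so no long/short distinction or parity can arise at all. This also disposes of $E_6/D_5$ and $E_7/E_6$ without the computer check you propose.
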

\begin{proof}
It is easy to see in the diagrams of Section \ref{HSDiag} that a standard tableau of any type other than $C_n/A_{n-1}$ is supported on a set of simple roots whose Coxeter graph is an induced subgraph of a Coxeter graph of type $A$.

For the $C_n/A_{n-1}$ case, the standard skew diagram for $v\bs u$ can include a single box labeled $s_n$ in the bottom left corner (see figure below).  If this corner is not an inside corner, then the box North and West of it (in the diagram for $u$) is an outside corner, and by the rules established for $C_n/A_{n-1}$, the box associated to $s_n$ is unmarked, as it would be in type $A$. If it is an inside corner, it is marked with a plus, exactly as it would be in Type $A$.  All other boxes in the diagram are marked according to type $A$ rules, and thus the polynomial associated to $\Lambda\bs M$ in the context of the quotient $C_n/A_{n-1}$ is the same as the polynomial associated with the same diagram in type $A$.
\end{proof}
\begin{eqnarray*}
\young(X321,:432)=\young(X+\ls\ls,:\ls+\ls)&\quad& \young(XX21,:432)=\young(XX+\ls,:+-+)\\
\text{$C_n/A_{n-1}$ standard, $s_n$ not inside}&\quad&\text{$C_n/A_{n-1}$ standard, $s_n$ inside}
\end{eqnarray*}

We call a partition a {\it staircase } if it can be identified with the partition underlying a Hermitian diagram of type $D_n/A_{n-1}$.  Note that the full list of maximal staircase partitions not explicitly belonging to $D_n/A_{n-1}$ or $C_n/A_{n-1}$ are 
\[\begin{array}{ccc}
\begin{array}{|c|c|}
\hline n-1&n\\\hline\multicolumn{1}{c|}{}&n-1\\\cline{2-2}\end{array}\in B_n/B_{n-1}&
\begin{array}{|c|c|c|}\hline5&4&3\\\hline\multicolumn{1}{c|}{}&2&4\\\cline{2-3}\multicolumn{2}{c|}{}&5\\\cline{3-3}\end{array}\in E_6/D_5\text{ or }E_7/E_6
&\begin{array}{|c|c|c|}\hline5&4&2\\\hline\multicolumn{1}{c|}{}&3&4\\\cline{2-3}\multicolumn{2}{c|}{}&5\\\cline{3-3}\end{array}\in E_7/E_6
\end{array}.\]
But in all cases, the rules are those of $D_n/A_{n-1}$, and so we can identify polynomials again as before.

The final general form of skew partition is an L shape, of which there are two exceptional maximal examples, and one infinite class:
\[\begin{array}{cc}
\begin{array}{|c|c|}\hline n-1&n\\\hline\multicolumn{1}{c|}{}&n-1\\\cline{2-2}\multicolumn{1}{c|}{}&\vdots\\\cline{2-2}\multicolumn{1}{c|}{}&1\\\cline{2-2}\end{array}\in B_n/B_{n-1}
&\begin{array}{|c|c|}\hline 3&4\\\hline\multicolumn{1}{c|}{}&5\\\cline{2-2}\multicolumn{1}{c|}{}&6\\\cline{2-2}\multicolumn{1}{c|}{}&7\\\cline{2-2}\end{array}\in E_7/E_6\\
\begin{array}{|c|c|c|c|}\hline 7&6&5&4\\\hline\multicolumn{3}{c|}{}&2\\\cline{4-4}\end{array}\in E_7/E_6
&\begin{array}{|c|c|}\hline 2&4\\\hline\multicolumn{1}{c|}{}&5\\\cline{2-2}\multicolumn{1}{c|}{}&6\\\cline{2-2}\multicolumn{1}{c|}{}&7\\\cline{2-2}\end{array}\in E_7/E_6\text{ and restricted in }E_6/D_5
\end{array}.\]
In type $B_n/B_{n-1}$, the have $R^J_{u,v}(q)=(q-1)q^{\ell(v)-\ell(u)-1}$ via Proposition \ref{BnClosed}, while the rule in type $E$ is clear: one plus in the upper left box, and no other signs.  Thus the relative $R$-polynomials are the same.

The final cases to compare are intervals contained solely in $E_6/D_5$ and $E_7/E_6$, which requires a computer check.  Thus we have:

\begin{thm}\label{CombInvR}
If $\l\bs\mu$ is any skew partition, and $\Lambda\bs M$, $\Lambda'\bs M'$ are skew diagrams of shape $\l\bs\mu$, from types $W^J$ and $(W')^{J'}$ respectively, then $R^J_{u,v}(q)=R^{J'}_{u',v'}(q)$.
\end{thm}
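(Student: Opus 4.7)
The plan is to reduce to the Main Theorem and then verify, skew-shape class by skew-shape class, that the marking data on $\Lambda\setminus M$ agrees regardless of the ambient Hermitian symmetric pair in which it is realized. By the Main Theorem, $R^J_{u,v}(q)$ is determined by the multiset of diagonal lengths of $+$-marked and $-$-marked runs in $\Lambda\setminus M$, together with the exponent $\eta_{u,v}$ (which is itself forced by the requirement that the degree equal $\ell(v)-\ell(u)=|\lambda\setminus\mu|$). So the theorem reduces to showing that the marking on the skew shape $\lambda\setminus\mu$ is the same in any $W^J$ supporting it.

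First I would isolate which skew shapes can be supported in more than one Hermitian symmetric quotient. The preceding propositions already handle single rows/columns, all standard skew shapes (using the observation that, outside $C_n/A_{n-1}$, the support is a Coxeter subgraph of type $A$, and in $C_n/A_{n-1}$ the lone $s_n$-box follows the Type $A$ rules), and staircase and L-shapes. I would organize the remaining possibilities into a finite taxonomy: (i) standard shapes in Type $A$; (ii) staircase shapes appearing in $D_n/A_{n-1}$, $C_n/A_{n-1}$, and transplants into $B_n/B_{n-1}$, $E_6/D_5$, $E_7/E_6$; (iii) L-shapes; (iv) the rectangular shapes in $B_n/B_{n-1}$, which by Proposition \ref{BnClosed} are $(q-1)q^{\ell(v)-\ell(u)-1}$, matching the corresponding single-row/column computation; and (v) a finite residue of shapes that live only inside the two exceptional quotients $E_6/D_5$ and $E_7/E_6$, settled by computer check.

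For each class in the taxonomy, the verification is essentially formal from Proposition \ref{ordertheorem} and the marking rules of Section \ref{marks}: a box's $+/-$ label is determined by (a) whether its row-word label is a left descent/ascent of $u$ inside $v$, and (b) the parity count $\delta(b)$ along the diagonal of that label. Both ingredients are already properties of the skew shape $\lambda\setminus\mu$ once we know the short/long classification of the row-word label. The only genuine content is to observe that in every class in the taxonomy, the short/long status of the label on each box is forced by the local geometry of $\lambda\setminus\mu$ (e.g.\ corner boxes labeled by $s_n$ in $C_n$ versus $D_n$ behave identically under the marking prescriptions, as already worked out in the $C_n/A_{n-1}$ standard case). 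This is the step I expect to be most delicate, because it requires one to cross-reference the marking convention for short versus long reflections against the row-word labelings listed in Definition \ref{HSDiag}.

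The main obstacle, then, is not any single hard computation but the bookkeeping: producing a complete and non-redundant list of skew shapes that embed into more than one type, and in each case matching the marking algorithm in the two ambient types. I would organize this by first proving equality within each infinite family ($A$, $B$, $C$, $D$), then using the previous propositions to bridge between families via the standard/staircase/L trichotomy, and finally invoking a computer enumeration for the genuinely exceptional $E$-only intervals. Combined with the reduction to the Main Theorem at the outset, this yields $R^J_{u,v}(q)=R^{J'}_{u',v'}(q)$ whenever $\Lambda\setminus M$ and $\Lambda'\setminus M'$ share the same unlabeled skew shape, which is the statement.
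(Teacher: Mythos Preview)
Your proposal is correct and follows essentially the same route as the paper: reduce to the Main Theorem so that only the marking data matters, then case-split the possible skew shapes into standard skew (handled via the Type $A$ reduction, with the $C_n/A_{n-1}$ caveat), staircases (identified with the $D_n/A_{n-1}$ rules), L-shapes (compared against $B_n/B_{n-1}$ via Proposition~\ref{BnClosed}), and a finite residual list inside $E_6/D_5$ and $E_7/E_6$ settled by computer. The paper's proof is in fact just the concatenation of the propositions immediately preceding the theorem, which you have correctly identified and invoked; your item (iv) is subsumed by the L-shape and single-row cases since $B_n/B_{n-1}$ supports no other skew shapes, but this is a harmless redundancy rather than an error.
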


\end{document}